\newcommand{\xr}[1]{\textcolor{red}{#1}}
\newtheorem{thm}{Theorem}[section]
\newtheorem{lem}[thm]{Lemma}
\newtheorem{cor}[thm]{Corollary}
\newtheorem{prop}[thm]{Proposition}
\theoremstyle{definition}
\newtheorem{defn}[thm]{Definition}
\newtheorem{exmp}[thm]{Example}
\newtheorem{rem}[thm]{Remark}
\newtheorem*{ack}{Acknowledgments}
\newcommand{\Nqlc}[0]{{\operatorname{Nqlc}}}
\newcommand{\Supp}[0]{{\operatorname{Supp}}}
\title[Effective freeness]
{The Angehrn--Siu type effective
freeness for quasi-log 
canonical pairs}
\author{Haidong Liu}
\date{\xr{2015/1/5, 16:00, version 0.20}}
\keywords{effective freeness, effective 
point separation, quasi-log canonical pairs, 
semi-log canonical pairs, inversion of 
adjunction}
\subjclass[2010]{Primary 14E05, Secondary 14E30} 
\address{Department of Mathematics, Graduate School of Science,
Kyoto University, Kyoto 606-8502, Japan}
\email{liu.dong.82u@st.kyoto-u.ac.jp}
\begin{document}
\maketitle

\begin{abstract}
We prove the Angehrn--Siu type effective freeness and 
effective point separation for quasi-log canonical 
pairs. As a natural consequence, we obtain that these two 
results hold for semi-log canonical pairs. 
One of the main ingredients of our proof is 
the inversion of adjunction for quasi-log canonical 
pairs, which is established in this paper.
\end{abstract}


\section{Introduction}\label{sec1}
The theory of mixed Hodge structures plays an important 
role in the recent developments of the minimal model 
program (see, for example, \cite{ks}). 
Now we have various powerful vanishing theorems 
based on the theory of mixed Hodge structures on cohomology 
with compact support (see \cite{fuji0}, \cite{fuji4}, and so on). 
They are much sharper than the Kawamata--Viehweg 
vanishing theorem and the (algebraic version of) Nadel 
vanishing theorem. By these new vanishing theorems, 
the fundamental theorems of the minimal 
model program were established for quasi-log canonical 
(qlc, for short) 
pairs (see \cite{fuji0}, \cite{fuji4}, and so on). 
Note that the notion of quasi-log structures was first introduced 
by Ambro in \cite{ambro}. The category of qlc 
pairs is very large and contains kawamata log 
terminal pairs, log canonical pairs, quasi-projective 
semi-log canonical pairs (see \cite[Theorem 1.1]{fuji6}), and 
so on. The notion of qlc pairs seems to be indispensable 
for the cohomological study of semi-log canonical 
pairs (see \cite{fuji6}). 
In this paper, we formulate the Angehrn--Siu type 
effective freeness and effective point separation 
for qlc pairs and prove them in the framework of 
quasi-log structures. Of course, our results 
generalize \cite{as}, \cite[5.8, 5.9]{kollar}, 
and \cite[Theorems 1.1 and 1.2]{fuji1}.
The effective freeness for qlc pairs is as follows.

\begin{thm}[Effective freeness]\label{thm1.1}
Let $[X,\omega]$ be a 
projective qlc pair such that 
$\omega$ is an $\mathbb R$-Cartier 
divisor and let $M$ be a Cartier divisor 
on $X$ such that $N=M-\omega$ is ample. 
Let $x\in X$ be a closed point. We assume 
that there are positive numbers $c(k)$ with the following properties.
\begin{itemize}
\item[$(1)$] If $x\in Z\subset X$ is an 
irreducible {\em{(}}positive dimensional{\em{)}} subvariety, then
$$
N^{\dim Z} \cdot Z>c(\dim Z)^{\dim Z}.
$$
\item[$(2)$] The numbers $c(k)$ satisfy the inequality:
$$
\sum \limits_{k=1}\limits^{\dim X} \frac{k}{c(k)}\leq 1.
$$
\end{itemize}
Then $\mathscr O_X(M)$ has a global section not vanishing at $x$.
\end{thm}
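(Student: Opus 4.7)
The plan is to adapt the Angehrn--Siu strategy, as refined by Koll\'ar and by Fujino in the log canonical case, to the quasi-log canonical framework via an induction on $\dim X$. The four steps are (i) reduction to the case where the minimal qlc stratum through $x$ is $X$ itself, (ii) construction of an effective $\mathbb R$-divisor whose addition to $\omega$ forces a new qlc stratum through $x$, (iii) tie-breaking and inversion of adjunction to pass to a proper qlc stratum on which the induction hypothesis applies, and (iv) lifting the resulting section to $X$ via the vanishing theorem for qlc pairs.

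For step (i), let $W$ be the minimal qlc stratum of $[X,\omega]$ containing $x$. Adjunction for qlc pairs equips $[W,\omega|_W]$ with a qlc structure, and hypothesis~(1) on $W$ is inherited from $X$ since every positive-dimensional subvariety of $W$ through $x$ is already such a subvariety of $X$ and $N|_W$ remains ample. Replacing $(X,\omega)$ by $(W,\omega|_W)$, I may assume that $X$ itself is the minimal qlc stratum through $x$, so that $x\notin\Nqlc(X,\omega)$ and the qlc structure behaves as a klt-type pair near $x$.

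For steps (ii) and (iii), using hypothesis~(1) at $Z=X$ and asymptotic Riemann--Roch, I produce for every small $\epsilon>0$ an effective $\mathbb R$-divisor $D\sim_{\mathbb R}\lambda N$ with $\lambda<n/c(n)+\epsilon$ (where $n=\dim X$) and $\operatorname{mult}_x D>n$, so that $[X,\omega+D]$ fails to be qlc at $x$. Letting $t_0\in(0,1]$ be the largest $t$ for which $[X,\omega+tD]$ is qlc at $x$ and perturbing $t_0 D$ by a small general element of $|N|_{\mathbb R}$ (the standard tie-breaking trick), I arrange that there is a unique minimal new qlc stratum $V\subsetneq X$ through $x$. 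Inversion of adjunction for qlc pairs, proved earlier in this paper, then equips $V$ with a qlc structure $[V,\omega_V]$, where the total coefficient $\lambda_{\mathrm{tot}}$ of $N$ in $\omega_V-\omega|_V$ is strictly less than $1$. Since $M|_V-\omega_V=(1-\lambda_{\mathrm{tot}})N|_V$ is ample and the scaled constants $c'(k)=(1-\lambda_{\mathrm{tot}})c(k)$ satisfy the inductive inequality---by the usual Angehrn--Siu bookkeeping, which shows that at most $\sum_{k>\dim V}k/c(k)$ of the ``$N$-budget'' is consumed in the descent from $X$ to $V$---the induction hypothesis applies on $V$.

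For step (iv), by induction on $\dim X$ (the base case $\dim V=0$ being trivial) there is a section of $\mathscr O_V(M|_V)$ not vanishing at $x$. From the short exact sequence
\[
0\to\mathscr I_V\otimes\mathscr O_X(M)\to\mathscr O_X(M)\to\mathscr O_V(M|_V)\to 0
\]
combined with Fujino's vanishing $H^1(X,\mathscr I_V\otimes\mathscr O_X(M))=0$ for qlc pairs, this section lifts to the required section of $\mathscr O_X(M)$. The main obstacle, and the technical heart of the argument, lies in step (iii): it is the inversion of adjunction for qlc pairs that justifies endowing $V$ with a qlc structure inherited from $(X,\omega+t_0 D+\text{perturbation})$, and one must simultaneously control the tie-breaking so that $V$ is genuinely a unique minimal qlc stratum and so that the $N$-budget accounting survives the passage through a dlt-type modification of the underlying quasi-log resolution rather than through honest singularities on $X$.
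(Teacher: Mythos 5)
Your step (ii) contains the decisive gap. You assert that once $\mathrm{mult}_x D>n$, the pair $[X,\omega+D]$ fails to be qlc at $x$. For quasi-log pairs this implication is only available at a \emph{general smooth} point: this is exactly Lemma \ref{lem3.2} of the paper, whose proof needs every stratum of $(Y,\Supp B_Y)$ to be smooth over a Zariski neighborhood of $x$, so that a blow-up along a component of $f^{-1}(x)$ converts the multiplicity bound into a coefficient $>1$ in $B_Y+f^*D$. At the given point $x$ --- which may be a singular point of $X$, over which the quasi-log resolution $f$ is completely uncontrolled --- the number $\mathrm{mult}_x D$ says nothing about the coefficients of $f^*D$ over $x$; there is no intrinsic discrepancy calculus on a qlc pair, so your phrase ``behaves as a klt-type pair near $x$'' has no content one can compute with. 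Bridging precisely this gap is the purpose of Proposition \ref{prop3.3}: one builds a divisor $B$ on $X\times C$ with high multiplicity along a section through $(x,0)$, gets non-qlc-ness at \emph{general} points $(s(t),t)$ from Lemma \ref{lem3.2}, and then specializes to $t=0$ by applying inversion of adjunction (Theorem \ref{thm2.10}) to the fiber $p_2^*0\cong U$ of the product family. Note also that you invoke inversion of adjunction only to ``equip $V$ with a qlc structure,'' which is in fact plain adjunction (Theorem \ref{thm2.7}(1)); the places where inversion of adjunction actually carries the paper's proof --- the specialization step just described, and the transfer of non-qlc-ness from the stratum $\overline Z$ up to $X$ after lifting the cutting divisor by Serre vanishing in Proposition \ref{prop3.4} --- are absent from your outline.

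There is a second, structural problem: the induction restart on $V$. To apply Theorem \ref{thm1.1} inductively you need $[V,\omega_V]$ to be an honest projective qlc pair, but your construction only makes $[X,\omega']$, $\omega'=\omega+t_0D+(\text{perturbation})$, qlc \emph{near} $x$; away from $x$ the locus $\Nqlc(X,\omega')$ can be nonempty and can meet $\overline V$, so $[\overline V,\omega'|_{\overline V}]$ need not be qlc and the induction hypothesis does not apply. Relatedly, Theorem \ref{thm2.7}(2) gives vanishing for $\mathscr I_{\overline V\cup \Nqlc(X,\omega')}$, not for $\mathscr I_{\overline V}$, so the lifting in your step (iv) also needs the section on $\overline V$ to be matched with data on the non-qlc locus. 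The paper sidesteps both issues by never inducting on the stratum: all cutting divisors are accumulated on $X$ itself (Propositions \ref{prop3.4} and \ref{prop3.1}), shrinking the minimal qlc stratum through $x$ until it equals $\{x\}$, with every intermediate assertion made only on a neighborhood $X^0$; the vanishing theorem is then applied once, to $\widetilde X=x\cup \Nqlc(X,\omega+D)$, where $x\cap \Nqlc(X,\omega+D)=\emptyset$ makes the extraction of a section not vanishing at $x$ immediate. Your budget bookkeeping $c'(k)=(1-\lambda_{\mathrm{tot}})c(k)$ is consistent in spirit with the paper's accounting $\sum_i c_i k_i/c(k_i)<1$, but without repairing the two gaps above the argument does not go through as written.
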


A key ingredient of the proof of Theorem \ref{thm1.1} is 
the inversion of adjunction for qlc pairs (see Theorem \ref{thm2.10}). 
We will formulate and prove it in Section \ref{sec2}.  

\begin{rem}\label{rem1.2} 
In Theorem \ref{thm1.1}, 
we have 
$H^{1}(X, \mathscr{I}_{W}\otimes \mathscr{O}_{X}(M))=0$, 
where $W$ is the minimal qlc stratum of $[X,\omega]$ 
passing through $x$ and 
$\mathscr{I}_{W}$ is the defining ideal 
sheaf of $W$ on $X$ (see Theorem \ref{thm2.7}). Therefore, 
the natural restriction map 
$$H^{0}(X, \mathscr{O}_{X}(M))\rightarrow 
H^{0}(W, \mathscr{O}_{W}(M)) $$ 
is surjective. Thus, by replacing $X$ with $W$, 
we can assume that $X$ is irreducible in Theorem \ref{thm1.1}. 
\end{rem}

By suitably modifying the proof of Theorem \ref{thm1.1}, 
we can prove the following effective point separation 
for qlc pairs without any difficulties. 

\begin{thm}[Effective point separation]\label{thm1.3}
Let $[X,\omega]$ be a 
projective qlc pair such that 
$\omega$ is an $\mathbb R$-Cartier 
divisor and let $M$ be a Cartier divisor 
on $X$ such that $N=M-\omega$ is ample. 
Let $x_{1}, x_{2}\in X$ be two closed points. We assume 
that there are positive numbers $c(k)$ with the following properties.
\begin{itemize}
\item[$(1)$] If $Z\subset X$ is an 
irreducible {\em{(}}positive dimensional{\em{)}} subvariety 
that contains $x_{1}$ or $x_{2}$, then
$$
N^{\dim Z} \cdot Z>c(\dim Z)^{\dim Z}.
$$
\item[$(2)$] The numbers $c(k)$ satisfy the inequality:
$$
\sum \limits_{k=1}\limits^{\dim X} 2^{1/k} \frac{k}{c(k)}\leq 1.
$$
\end{itemize}
Then $\mathscr O_X(M)$ has a global section separating $x_{1}$ and $x_{2}$.
\end{thm}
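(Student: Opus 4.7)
The plan is to adapt the proof of Theorem \ref{thm1.1} to the two-point setting, with the same overall induction-on-dimension strategy driven by the inversion of adjunction (Theorem \ref{thm2.10}) and the vanishing theorem (Theorem \ref{thm2.7}). As in Remark \ref{rem1.2}, the vanishing $H^1(X, \mathscr{I}_{W_1 \cup W_2} \otimes \mathscr{O}_X(M)) = 0$, where $W_i$ is the minimal qlc stratum of $[X,\omega]$ through $x_i$, implies that the restriction map $H^0(X, \mathscr{O}_X(M)) \to H^0(W_1 \cup W_2, \mathscr{O}_{W_1 \cup W_2}(M))$ is surjective; thus it suffices to separate $x_1$ and $x_2$ on the qlc pair structure(s) supported on $W_1 \cup W_2$. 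If $W_1 \neq W_2$, any two sections nonvanishing at $x_1$ and $x_2$ respectively already do the job (using Theorem \ref{thm1.1} on each $W_i$), so we may assume $W_1 = W_2$ and replace $X$ by this irreducible stratum.

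The core construction, following Angehrn--Siu and Kollár, is to produce an effective $\mathbb R$-Cartier divisor $D \sim_{\mathbb R} \lambda N$ with $\lambda < 1$ such that $[X, \omega + D]$ fails to be qlc at both $x_1$ and $x_2$ simultaneously. For a single point, hypothesis $(1)$ together with a standard volume/Riemann--Roch count produces such a divisor with $\lambda$ controlled by $k/c(k)$ on a $k$-dimensional subvariety. To impose concentrated singularities at two points at once, the dimension count on $H^0(X, \mathscr{O}_X(mN))$ must accommodate two sets of vanishing conditions of comparable order, which asymptotically doubles the required volume: one needs $N^{\dim Z} \cdot Z > 2\, c(\dim Z)^{\dim Z}$, i.e.\ $N^{\dim Z}\cdot Z > \bigl(2^{1/\dim Z} c(\dim Z)\bigr)^{\dim Z}$. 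This is precisely the quantitative role of the factor $2^{1/k}$ in condition $(2)$.

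Once such a $D$ is in hand, apply the tie-breaking perturbation to arrange that the non-qlc locus of $[X, \omega + D]$ near $\{x_1, x_2\}$ consists of well-defined minimal qlc centers $V_1 \ni x_1$ and $V_2 \ni x_2$ of dimension strictly less than $\dim X$. Theorem \ref{thm2.10} then equips $V_1$ and $V_2$ with induced qlc structures whose quasi-log canonical classes are $(\omega + D)|_{V_i}$ up to $\mathbb R$-linear equivalence. The budget $\sum_k 2^{1/k} k/c(k) \le 1$ is designed exactly so that, after subtracting the share $\lambda$ consumed in passing to $V_i$, the remaining coefficients still satisfy the analogous inequality on $V_i$; the hypothesis $(1)$ on $N$ is inherited by subvarieties of $V_i$. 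By induction on $\dim X$, we find sections on $V_1 \sqcup V_2$ separating $x_1$ and $x_2$, which lift to $X$ by the vanishing discussed above.

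The main obstacle is the simultaneous construction in the second paragraph: producing a single $D$ that is non-qlc at two prescribed points with $\lambda < 1$, rather than two independent divisors. This is the only step where the numerical hypotheses truly differ from those of Theorem \ref{thm1.1}, and it is where the $2^{1/k}$ factor is forced. Once this is established, the tie-breaking and inversion-of-adjunction machinery developed for Theorem \ref{thm1.1} applies uniformly, and the induction closes without further difficulty.
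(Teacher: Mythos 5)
Your outline follows the right family of ideas (reduce to $W_1\cup W_2$, cut down with a doubled volume bound, induct), but two of your central claims are false or unjustified, and they are precisely the points where the two-point problem differs from Theorem \ref{thm1.1}. First, the reduction ``if $W_1\neq W_2$, any two sections nonvanishing at $x_1$ and $x_2$ respectively already do the job'' is wrong. Separation requires a section vanishing at one point and nonvanishing at the other, i.e.\ that the image of the evaluation map $H^0(X,\mathscr O_X(M))\to \mathbb C(x_1)\oplus \mathbb C(x_2)$ is not contained in a line with both coordinates nonzero; two applications of Theorem \ref{thm1.1} only show that the image lies in neither coordinate axis, which is compatible with the image being the diagonal line. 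This case genuinely needs work: the paper handles $V=W_1\cap W_2\subsetneq W_2$ by constructing a section of $\mathscr I_{W_1}\otimes \mathscr O_X(M)$ not vanishing at $x_2$, and to do so it must run the cutting argument on $W_2$ while keeping $V$ a qlc center at every step (Propositions \ref{prop4.1} and \ref{prop4.2}), so that the vanishing theorem applies to $\widetilde W=x_2\cup V\cup \Nqlc(W_2,\omega\vert_{W_2}+D)$ and yields a section vanishing along $V$ (hence gluing with the zero section on $W_1$) and nonzero at $x_2$. Nothing in your proposal produces a section that vanishes at one of the two points.

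Second, in the case $W_1=W_2$, your tie-breaking claim---that the perturbed pair can be arranged to have minimal qlc centers $V_1\ni x_1$ and $V_2\ni x_2$ simultaneously---is unjustified, and it is exactly the failure point of the naive argument. The doubled bound $H^k\cdot \overline Z>2k^k$ (the source of the factor $2^{1/k}$) only yields a single divisor $B$ such that $[X,\omega+B]$ is non-qlc at both points (Proposition \ref{prop4.4}). When you then scale by the maximal $c$ preserving qlc-ness, the threshold is attained at one point first: you obtain a critical qlc center through, say, $x_1$, while at $x_2$ all you can conclude is the dichotomy of Proposition \ref{prop4.5}: either the pair is non-qlc at $x_2$, or some qlc center (possibly the same one, not necessarily minimal) passes through $x_2$. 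The paper's proof is an iteration over precisely this dichotomy---cut at the point where qlc-ness holds, lowering the dimension of its minimal stratum, until either one point falls into the non-qlc locus (where the vanishing theorem separates directly) or the two points lie on different minimal strata (which returns you to the first case via Proposition \ref{prop4.1}). Your induction on ``$V_1\sqcup V_2$'' assumes this asymmetry away, and the centers need not even be disjoint. A smaller inaccuracy: it is adjunction (Theorem \ref{thm2.7}), not inversion of adjunction (Theorem \ref{thm2.10}), that endows a union of qlc strata with its induced quasi-log structure. As written, your third paragraph is not a proof but a restatement of what needs to be proved.
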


\begin{rem}\label{rem1.4} 
In Theorem \ref{thm1.3}, 
let 
$W_{1}$, $W_{2}$ be the minimal qlc stratum of $[X,\omega]$ 
passing through $x_{1}$, $x_{2}$ respectively. 
Possibly after switching $x_{1}$ and $x_{2}$,
we can assume that $\dim W_{1} \leq \dim W_{2}$.
We put $W=W_{1} \cup W_{2}$ with the reduced structure. 
Then, by adjunction, 
$[W, \omega|_W]$ has a natural quasi-log structure 
with only qlc singularities induced by $[X, \omega]$ 
(see Theorem \ref{thm2.7}). 
Moreover, we have 
$H^{1}(X, \mathscr{I}_{W}\otimes \mathscr{O}_{X}(M))=0$, 
where $\mathscr{I}_{W}$ is the defining ideal 
sheaf of $W$ on $X$ (see Theorem \ref{thm2.7}). 
Therefore, 
the natural restriction map 
$$H^{0}(X, \mathscr{O}_{X}(M))\rightarrow 
H^{0}(W, \mathscr{O}_{W}(M)) $$ 
is surjective. Thus, as in Remark \ref{rem1.2}, 
we can replace $X$ with $W$ in Theorem \ref{thm1.3}. 
\end{rem}

By \cite[Theorem 1.1]{fuji6}, 
we know that any quasi-projective  
semi-log canonical pair has a natural quasi-log 
structure with only qlc singularities, 
which is  
compatible with the original semi-log canonical 
structure. Therefore, Theorem \ref{thm1.1} and 
Theorem \ref{thm1.3} also hold for semi-log 
canonical pairs. 
For the precise statements, 
see Corollary \ref{cor3.5} and Corollary \ref{cor4.6}. 
Our proof of Theorem \ref{thm1.1} and Theorem \ref{thm1.3} 
works very well in the category of quasi-log schemes. 
On the other hand, it does not seem to 
work well in the category of 
semi-log canonical pairs. 
This is one of the key points of 
formulating and proving the effective freeness and effective 
point separation for qlc pairs. 

The paper is organized as follows.
In Section \ref{sec2}, we recall some basic definitions and 
properties of quasi-log schemes. Then 
we formulate and prove the inversion of adjunction for 
qlc pairs, which will play a crucial role in this paper. 
Section \ref{sec3} is devoted to the proof of Theorem \ref{thm1.1}. 
In Section \ref{sec4}, we prove Theorem \ref{thm1.3}. 
The proof of Theorem \ref{thm1.3} is essentially the 
same as the proof of Theorem \ref{thm1.1}. 

\begin{ack} 
The author would like to thank his advisor 
Professor Osamu Fujino
all the time for his great 
support and quite a lot of wonderful comments and useful discussions.
\end{ack}

We will work over $\mathbb C$, the complex number field, 
throughout this paper. 
We note that a {\em{scheme}} means a separated scheme 
of finite type over $\mathbb C$.  
For the details of the theory of quasi-log schemes, 
see \cite[Chapter 6]{fuji4} and \cite{fuji5}. 
We also note that \cite{fuji2} is a gentle introduction to 
the theory of quasi-log schemes. 

\section{Inversion of adjunction}\label{sec2}

Let us recall the definition of quasi-log schemes, which was 
first introduced by Ambro in \cite{ambro}. 
For the details of the theory of quasi-log schemes, 
see \cite[Chapter 6]{fuji4}.

\begin{defn}[{\cite[Chapter 6]{fuji4} 
and \cite[Definition 4.1]{ambro}}]\label{def2.1}
A quasi-log scheme is a scheme $X$ endowed with an 
$\mathbb{R}$-Cartier 
divisor (or $\mathbb R$-line bundle) 
$\omega$ on $X$, a proper closed subscheme 
$X_{-\infty}\subset X$, and a finite 
collection $\left\{C_{i} \right\}$ of reduced and irreducible 
subschemes 
of $X$ such that 
there exists a proper morphism $f: (Y, B_{Y})\rightarrow X$ 
from a globally embedded simple normal crossing 
pair satisfying the following properties: 
\begin{itemize}
\item[(1)] $K_{Y}+B_{Y}\sim_{\mathbb{R}} f^{*}\omega$. 
\item[(2)] The natural map 
$\mathscr{O}_{X} 
\rightarrow f_{*}\mathscr{O}_{Y}(\lceil -(B_{Y}^{<1}) \rceil)$
induces an isomorphism 
$$\mathscr{I}_{X_{-\infty}} \cong f_{*}\mathscr{O}_{Y}(\lceil 
-(B_{Y}^{<1}) \rceil-\lfloor B_{Y}^{>1} \rfloor),$$ 
where $\mathscr I _{X_{-\infty}}$ is the defining 
ideal sheaf of $X_{-\infty}$. 
\item[(3)] The collection of subvarieties 
$\left\{C_{i} \right\}$ coincides 
with the images of $(Y, B_{Y})$-strata that 
are not included in $X_{-\infty}$.
\end{itemize} 
The morphism $f:(Y, B_Y)\to X$ is usually called a 
quasi-log resolution of $[X, \omega]$. 
We sometimes use $\Nqlc (X, \omega)$ to denote 
$X_{-\infty}$.
If $X_{-\infty}=\emptyset$,
then we usually say that 
$[X,\omega]$ is a quasi-log canonical pair 
(a qlc pair, for short) or $[X, \omega]$ is a quasi-log 
scheme with only qlc singularities. 
\end{defn}

We note that $X$ may be reducible and is 
not necessarily equidimensional in Definition \ref{def2.1} 
(see Example \ref{exmp2.5} below). 
We give some remarks on Definition \ref{def2.1}. 

\begin{rem}\label{rem2.2} 
Definition \ref{def2.1} may look slightly different from 
the definition in \cite{ambro}. 
In \cite{ambro}, 
Ambro only assumes that 
$(Y, B_{Y})$ is an {\em{embedded normal crossing pair}} 
(see \cite[Section 2]{ambro} and 
\cite[Chapter 5]{fuji4} for the definitions and examples
of {\em{embedded normal crossing pair}} and see  
\cite[Appendix]{fuji5} for the difference between 
{\em{embedded normal crossing pair}} and 
{\em{globally embedded simple normal crossing 
pair}}). By \cite{fuji4} and \cite{fuji5}, we see that 
Definition \ref{def2.1} is equivalent to 
the original definition in \cite{ambro}. 
\end{rem}

\begin{rem}\label{rem2.3} 
By \cite[Remark 4.2]{ambro}, $[X, \omega]$ is a qlc 
pair if and only if the 
coefficients of $B_{Y}$ are $\leq 1$, that is, $B_{Y}$ 
is a subboundary $\mathbb R$-divisor. In this case, we have 
$\mathscr{O}_{X} \cong f_{*}\mathscr{O}_{Y}(\lceil -(B_{Y}^{<1}) 
\rceil)$. 
In particular, we see that $f$ is surjective 
and $\mathscr{O}_{X}\cong f_{*}\mathscr{O}_{Y}$. 
Therefore, $f$ has connected fibers and $X$ is seminormal. 
In particular, $X$ is reduced.
\end{rem}

\begin{rem}\label{rem2.4} 
The subvariety $C_i$ in Definition \ref{def2.1} is called 
a {\em{qlc stratum}} of $[X, \omega]$. 
A {\em{qlc center}} of $[X, \omega]$ 
means a qlc stratum of $[X, \omega]$ 
which is not an irreducible 
component of $X$.
\end{rem}

We give some examples of 
qlc pairs to see why the notion of qlc pairs 
is very important. 

\begin{exmp}\label{exmp2.5} 
Every log canonical pair $(X, \Delta)$ 
defines a natural quasi-log structure on 
$[X,K_{X}+\Delta]$ to make $[X,K_{X}+\Delta]$ 
a qlc pair. This idea played a crucial role in \cite{kk} 
to prove that log canonical pairs have only Du Bois 
singularities. 
Let $\{C_i\}_{i\in I}$ be the set of log canonical 
centers of $(X, \Delta)$. 
We put $W=\bigcup _{i\in J} C_i$ for any $\emptyset 
\ne J\subset I$ with 
the reduced structure. 
Then, by adjunction (see Theorem \ref{thm2.7}), 
$[W, (K_X+\Delta)|_W]$ has a natural 
quasi-log structure with only qlc singularities induced by 
$[X, K_X+\Delta]$.
\end{exmp}

\begin{exmp}\label{exmp2.6} 
A quasi-projective semi-log canonical pair 
$(X, \Delta)$ 
has a natural quasi-log structure 
on $[X,K_{X}+\Delta]$ to make $[X,K_{X}+\Delta]$ a qlc pair. 
For the details, see \cite[Theorem 1.1]{fuji6}. 
\end{exmp}

The above examples show that 
we need the theory of quasi-log schemes to 
understand log canonial pairs and semi-log canonical pairs 
deeply. 

The following theorem is one of the key results of the 
theory of quasi-log schemes, which heavily 
depends on 
the theory of mixed Hodge structures on cohomology with 
compact support.

\begin{thm}[{\cite[Theorems 4.4 and 7.3]{ambro} 
and \cite[Chapter 6]{fuji4}}]\label{thm2.7}
Let $[X,\omega]$ be a 
quasi-log scheme and let $X'$ be the union of $X_{-\infty}$ with a 
$($possibly empty$)$ union 
of some qlc strata of $[X,\omega]$. 
Then we have the following properties.
\begin{itemize}
\item[$(1)$] {\em{(Adjunction).}} 
Assume that $X'\neq X_{-\infty}$. 
Then $X'$ is a quasi-log scheme with $\omega'=\omega \vert_{X'}$ 
and $X'_{-\infty}=X_{-\infty}$. Moreover, the qlc 
strata of $[X',\omega']$ are exactly the qlc strata 
of $[X,\omega]$ that are included in $X'$.
\item[$(2)$] {\em{(Vanishing theorem).}} Assume 
that $ \pi: X \rightarrow S$ is a proper 
morphism between schemes. 
Let $L$ be a Cartier divisor on $X$ such that 
$L-\omega$ is nef and log big over $S$ 
with respect to $[X,\omega]$. 
Then $R^{i}\pi_{*}(\mathscr{I}_{X'}\otimes \mathscr{O}_{X}(L))=0$ 
for every $i>0$, where $\mathscr{I}_{X'}$ is the 
defining ideal sheaf of $X'$ on $X$.
\end{itemize}
Note that an $\mathbb{R}$-Cartier 
divisor is called nef and log big over $S$ 
with respect to $[X, \omega]$ 
if it is nef and big over $S$ and big over $S$ 
on every qlc stratum of $[X, \omega]$ when it is 
restricted to that stratum. 
\end{thm}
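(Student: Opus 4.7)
The plan is to prove both parts simultaneously by pulling the question back along a quasi-log resolution $f:(Y,B_Y)\to X$ and reducing to the corresponding facts for the simple normal crossing pair $(Y,B_Y)$. This is viable because the qlc strata of $[X,\omega]$ are by definition the $f$-images of strata of $(Y,B_Y)$ not contained in $X_{-\infty}$, so a union of qlc strata together with $X_{-\infty}$ corresponds upstairs to a union of $(Y,B_Y)$-strata.

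For the adjunction part, let $\mathcal{S}$ denote the collection of $(Y,B_Y)$-strata whose image under $f$ lies in $X'$, and set $Y'=\bigcup_{S\in\mathcal{S}} S$. After replacing $f$ by a suitable modification (blowing up in the boundary in order to keep the globally embedded SNC condition, via Szabó-type resolution as in the appendix of \cite{fuji5}), I would define $B_{Y'}$ by the adjunction formula $K_{Y'}+B_{Y'}=(K_Y+B_Y-Y')|_{Y'}$, so that $f':=f|_{Y'}\colon (Y',B_{Y'})\to X'$ is the candidate quasi-log resolution. Condition (1) of Definition \ref{def2.1} follows from $K_Y+B_Y\sim_{\mathbb R} f^*\omega$ by restriction, and (3) is tautological from the identification of strata. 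The delicate point is (2): one must show that $f'_*\mathscr{O}_{Y'}(\lceil -B_{Y'}^{<1}\rceil-\lfloor B_{Y'}^{>1}\rfloor)$ is the defining ideal of $X_{-\infty}$ inside $X'$. I would deduce this from the original ideal-sheaf isomorphism on $X$ by twisting the short exact sequence $0\to \mathscr{O}_Y(-Y')\to \mathscr{O}_Y\to \mathscr{O}_{Y'}\to 0$ by the appropriate round-up/round-down divisor and applying vanishing to the first term.

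For the vanishing theorem, pulling back along $f$, the divisor $f^*L-(K_Y+B_Y)\sim_{\mathbb R} f^*(L-\omega)$ is nef and log big over $S$ with respect to $(Y,B_Y)$. I would then invoke Fujino's vanishing theorem for globally embedded simple normal crossing pairs, itself a consequence of the $E_1$-degeneration of the Hodge-to-de Rham spectral sequence for cohomology with compact support of strata, to obtain $R^i(\pi\circ f)_*\bigl(\mathscr{O}_Y(\lceil -B_Y^{<1}\rceil-\lfloor B_Y^{>1}\rfloor-Y')\otimes f^*\mathscr{O}_X(L)\bigr)=0$ for all $i>0$. Combining with the ideal-sheaf isomorphisms established on $X$ and (just derived) on $X'$, the projection formula and the Leray spectral sequence for $\pi\circ f$ assemble into the desired vanishing $R^i\pi_*(\mathscr{I}_{X'}\otimes \mathscr{O}_X(L))=0$.

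The main obstacle I anticipate is the birational bookkeeping behind the adjunction part: one must choose a modification of $f$ whose restriction to the chosen union of strata is again a globally embedded simple normal crossing pair (not merely an embedded normal crossing pair in the weaker sense of \cite{ambro}), and one must rule out the appearance of spurious qlc centers on $X'$ that do not already come from $X$. Once this combinatorial and resolution-theoretic setup is in place, the Hodge-theoretic vanishing on $Y$ does essentially all of the analytic work, and both parts of the theorem follow in parallel.
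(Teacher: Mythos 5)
First, a point about the comparison itself: the paper does not prove Theorem \ref{thm2.7} at all. It is imported as a black box from \cite[Theorems 4.4 and 7.3]{ambro} and \cite[Chapter 6]{fuji4}, so your attempt can only be measured against those sources. Your skeleton is indeed theirs: pass to a quasi-log resolution, blow up so that the strata lying over $X'$ become a union $Y'$ of irreducible components, write $Y=Y'\cup Y''$, restrict $K_Y+B_Y$, and push forward the residue exact sequence
$0\to\mathscr O_{Y''}(A''-Y'|_{Y''})\to\mathscr O_Y(A)\to\mathscr O_{Y'}(A')\to 0$,
where $A=\lceil -(B_Y^{<1})\rceil-\lfloor B_Y^{>1}\rfloor$ and $A'$, $A''$ denote the corresponding divisors on $Y'$, $Y''$. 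However, each of the three places where you invoke ``vanishing'' is exactly where your sketch fails, because in each case the statement you need is not a vanishing theorem.

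Concretely: (i) $f^*(L-\omega)$ is \emph{not} nef and log big over $S$ with respect to $(Y,B_Y)$ in general, since bigness does not survive pullback along a morphism whose restriction to a stratum has positive-dimensional fibers. Already for a log resolution of a log canonical surface pair with a zero-dimensional lc center $x$, the exceptional stratum $E$ over $x$ satisfies $f^*(L-\omega)|_E\equiv 0$, which is not big. Fujino's vanishing theorem for embedded simple normal crossing pairs is formulated precisely to avoid this transport: its hypothesis is that the divisor on $Y$ equals $K_Y+(\text{boundary})$ plus $f^*$ of a divisor that is nef over $S$ and big over $S$ on the $f$-\emph{images} of the strata; log bigness is never pulled back upstairs. (ii) In the adjunction part, no vanishing is available for the first term of the exact sequence: $A''-Y'|_{Y''}$ is, up to boundary terms, $\mathbb R$-linearly equivalent to $K_{Y''}-f^*\omega|_{Y''}$, which carries no positivity whatsoever, and $R^1f_*\mathscr O_{Y''}(A''-Y'|_{Y''})$ is nonzero in general. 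What Ambro and Fujino actually use is the strict support condition (torsion-freeness theorem): every associated prime of this $R^1f_*$ is the generic point of the $f$-image of a stratum of a pair $(Y'',\Delta'')$ whose boundary $\Delta''$ omits $Y'|_{Y''}$; by the construction of $Y'$ none of these images is contained in $X'$, whereas the image of the connecting homomorphism is supported in $X'$, so the connecting map is zero. (iii) Finally, the Leray spectral sequence cannot take you from $R^i(\pi\circ f)_*=0$ to $R^i\pi_*(f_*(\,\cdot\,))=0$: the edge terms $E_2^{p,0}=R^p\pi_*f_*$ can be killed by incoming differentials from $E_2^{p-2,1}$, so vanishing of the abutment says nothing about them. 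The cited theorem in fact proves the stronger Koll\'ar-type statement $R^p\pi_*R^qf_*(\,\cdot\,)=0$ for every $p>0$ and every $q\geq 0$, and this again rests on the torsion-freeness package, not on vanishing of the total direct image. In short, your reduction to the simple normal crossing level is the right move and matches the references, but the Hodge-theoretic input required is the injectivity/torsion-freeness machinery, which is strictly stronger than, and cannot be replaced by, vanishing theorems.
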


We will use the following two lemmas in the 
proof of Theorem \ref{thm1.1} and Theorem \ref{thm1.3}. 

\begin{lem}\label{lem2.8} 
Let $[X,\omega]$ be a qlc pair and 
 $B$ be an effective $\mathbb{R}$-Cartier divisor 
 on $X$. Assume that $\Supp B$ contains 
 no qlc centers of $[X,\omega]$. 
 Then $[X,\omega+B]$ has a 
 natural quasi-log structure induced by $[X,\omega]$.
\end{lem}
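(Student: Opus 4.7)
The plan is to construct a new quasi-log resolution for $[X,\omega+B]$ by pulling back $B$ and absorbing it into the boundary of a given resolution of $[X,\omega]$, and then to verify the three conditions of Definition \ref{def2.1}.

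Fix a quasi-log resolution $f\colon (Y,B_Y)\to X$ of $[X,\omega]$. Since $[X,\omega]$ is qlc, Remark \ref{rem2.3} gives that $B_Y$ is a subboundary. The hypothesis on $\Supp B$, combined with condition (3) of Definition \ref{def2.1}, implies that no stratum of $(Y,B_Y)$ is contained in $\Supp(f^{*}B)$. I would then take a proper birational morphism $g\colon Y'\to Y$ from a globally embedded simple normal crossing pair, obtained by blow-ups along smooth centers compatible with the embedded snc structure, so that the support of $g^{*}B_Y+(f\circ g)^{*}B$ together with the exceptional locus of $g$ forms a globally embedded simple normal crossing divisor. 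Set $f'=f\circ g$, let $B_{Y'}$ be the unique $\mathbb R$-divisor with $K_{Y'}+B_{Y'}=g^{*}(K_Y+B_Y)$, and put $B_{Y'}'=B_{Y'}+(f')^{*}B$. Then $K_{Y'}+B_{Y'}'\sim_{\mathbb R}(f')^{*}(\omega+B)$, which gives condition (1).

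Next I would define the candidate non-qlc locus $X'_{-\infty}\subset X$ as the closed subscheme cut out by the ideal $f'_{*}\mathscr O_{Y'}(\lceil -(B_{Y'}')^{<1}\rceil-\lfloor (B_{Y'}')^{>1}\rfloor)$ (viewed inside $\mathscr O_X$), and take the new collection of subvarieties to be the images under $f'$ of the $(Y',B_{Y'}')$-strata not contained in $X'_{-\infty}$. Condition (3) of Definition \ref{def2.1} then holds by construction.

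The main obstacle is checking condition (2), namely that the natural map $\mathscr O_X\to f'_{*}\mathscr O_{Y'}(\lceil -(B_{Y'}')^{<1}\rceil)$ is an isomorphism whose restriction identifies $\mathscr I_{X'_{-\infty}}$ with $f'_{*}\mathscr O_{Y'}(\lceil -(B_{Y'}')^{<1}\rceil-\lfloor (B_{Y'}')^{>1}\rfloor)$. The key use of the hypothesis is at this point: since $(f')^{*}B$ contains no stratum of $(Y',B_{Y'})$ in its support, the components of $(B_{Y'}')^{<1}$ that differ from those of $B_{Y'}^{<1}$ are either $g$-exceptional or come genuinely from $(f')^{*}B$, and a projection-formula argument on $g$ (using $g_{*}\mathscr O_{Y'}=\mathscr O_Y$) reduces the push-forward computation to the analogous isomorphism for the original quasi-log structure of $[X,\omega]$, which holds because that structure is qlc.
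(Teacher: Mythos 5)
Your proposal is correct and takes essentially the same route as the paper: pull $B$ back to a quasi-log resolution, blow up further so that $B_Y+f^*B$ has simple normal crossing support, declare the non-qlc ideal to be $f_*\mathscr O_Y(\lceil -(B_Y+f^*B)^{<1}\rceil-\lfloor (B_Y+f^*B)^{>1}\rfloor)$ together with the images of the new strata, and use qlc-ness of $[X,\omega]$ (i.e.\ $f_*\mathscr O_Y(\lceil -(B_Y^{<1})\rceil)=\mathscr O_X$, Remark \ref{rem2.3}) to see that this ideal actually sits inside $\mathscr O_X$. The paper carries out this last verification by the one-line componentwise inequality $\lceil -(B_Y+f^*B)^{<1}\rceil-\lfloor (B_Y+f^*B)^{>1}\rfloor\leq \lceil -(B_Y^{<1})\rceil$ after renaming the blown-up resolution, which is exactly what your reduction along $g$ amounts to.
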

\begin{proof}
Let $f: (Y, B_{Y})\rightarrow X$ be 
a quasi-log resolution, where $(Y, B_{Y})$ 
is a globally embedded simple normal crossing 
pair. By taking further blow-ups,  
we can assume 
that $(Y, B_{Y}+f^{*}B)$ is a globally embedded simple normal 
crossing pair.
We note that $B_{Y}^{=1}$ and 
$\Supp f^{*}B$ have no common components by the assumption. 
We have $K_{Y}+B_{Y}+f^{*}B\sim_{\mathbb{R}} f^{*}(\omega+B)$. 
We put 
\begin{equation*}
\begin{split}
\mathscr{I}_{\Nqlc(X, \omega+B)} &= f_{*}
\mathscr{O}_{Y}(\lceil -(B_{Y}+f^{*}B)^{<1} 
\rceil-\lfloor (B_{Y}+f^{*}B)^{>1} \rfloor)
\\
& \subset f_{*}\mathscr{O}_{Y}
(\lceil -(B_{Y}^{<1}) \rceil)= \mathscr O_X. 
\end{split}
\end{equation*}
It gives the so-called ideal sheaf of the non-qlc locus 
$\Nqlc(X, \omega+B)$. 
By construction, there is 
a collection of subschemes $\left\{\overline{C}_{i} \right\}$ 
coincides with the image of $(Y, B_{Y}+f^{*}B)$-strata. Note that 
$\left\{\overline{C}_{i} \right\}\supset \left\{C_{i} \right\}$ by 
construction, where $\{C_i\}$ is 
the set of qlc strata of $[X, \omega]$.
The above conditions 
give a natural quasi-log structure of $[X,\omega+B]$.
\end{proof}

\begin{lem}[{\cite[Lemma 
4.6]{fuji7}}]\label{lem2.9} 
Let $[X,\omega]$ be a qlc pair such that 
$X$ is irreducible. Let $B$ be an effective $\mathbb{R}$-Cartier 
divisor on $X$. 
Then $[X,\omega+B]$ has a 
natural quasi-log structure, which coincides 
with the original quasi-log structure of $[X,\omega]$ 
outside $\Supp B$.
\end{lem}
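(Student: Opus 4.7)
The plan is to mimic the construction in Lemma \ref{lem2.8}, now without the hypothesis that $\Supp B$ avoids qlc centers. I would start from a quasi-log resolution $f : (Y, B_Y) \to X$ of $[X,\omega]$ and, after additional blow-ups, arrange that $(Y, B_Y + f^*B)$ is a globally embedded simple normal crossing pair. Setting $B'_Y := B_Y + f^*B$, the relation
\[
K_Y + B'_Y \;=\; K_Y + B_Y + f^*B \;\sim_{\mathbb R}\; f^*(\omega + B)
\]
gives condition~(1) of Definition \ref{def2.1}, and I would propose $f:(Y, B'_Y) \to X$ as the quasi-log resolution for the candidate structure on $[X, \omega + B]$, taking the qlc strata to be the images of the $(Y, B'_Y)$-strata not contained in the non-qlc locus constructed below.

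The main step, and the precise point at which the proof of Lemma \ref{lem2.8} used its hypothesis, is verifying condition~(2) of Definition \ref{def2.1}. I would set
\[
\mathscr I_{\Nqlc(X,\omega+B)} \;:=\; f_{*}\mathscr O_Y\bigl(\lceil -(B'_Y)^{<1}\rceil - \lfloor (B'_Y)^{>1}\rfloor\bigr)
\]
and exhibit it as a subsheaf of $\mathscr O_X$ by a coefficient-wise comparison on $Y$. Writing $B_Y = \sum b_i D_i$ and $f^*B = \sum c_i D_i$ on the common snc support, one has $b_i \leq 1$ because $[X,\omega]$ is qlc (Remark \ref{rem2.3}) and $c_i \geq 0$ because $B$ is effective; a short case analysis on whether $b_i + c_i$ is less than, equal to, or greater than $1$ should yield the Weil-divisor inequality
\[
\lceil -(B'_Y)^{<1}\rceil - \lfloor (B'_Y)^{>1}\rfloor \;\leq\; \lceil -B_Y^{<1}\rceil.
\]
Pushing forward and invoking the qlc identity $f_{*}\mathscr O_Y(\lceil -B_Y^{<1}\rceil) = \mathscr O_X$ then exhibits $\mathscr I_{\Nqlc(X,\omega+B)}$ as an ideal of $\mathscr O_X$, cutting out $\Nqlc(X,\omega+B)$; the isomorphism required by Definition \ref{def2.1}(2) is then a tautology of the construction.

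The remaining ``coincides outside $\Supp B$'' claim is immediate: on the open set $X \setminus \Supp B$ we have $f^*B = 0$, so $B'_Y = B_Y$ there and the whole construction restricts to the original one. The irreducibility of $X$ enters precisely at this last step to guarantee that $\Supp B$ is a proper closed subset of $X$ (so the comparison has content) and that $X$ itself survives as a qlc stratum of $[X, \omega + B]$ rather than being absorbed into $\Nqlc$. The place I anticipate needing the most care is the coefficient inequality above: the cases $b_i = 1$ with $c_i > 0$ (which were ruled out in Lemma \ref{lem2.8}) and $b_i < 0$ with $b_i + c_i > 1$ each receive contributions of opposite sign on the two sides, and must be handled by combining the bounds $\lceil -b_i\rceil \geq 0$ for $b_i < 1$ and $\lfloor b_i + c_i\rfloor \geq 1$ for $b_i + c_i > 1$.
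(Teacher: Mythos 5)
Your proposal is, in substance, a rerun of the paper's proof of Lemma \ref{lem2.8}, and it founders on exactly the point that the hypothesis of Lemma \ref{lem2.8} exists to exclude. (Note that the paper itself does not prove Lemma \ref{lem2.9}; it quotes it from Fujino, precisely because the argument of Lemma \ref{lem2.8} does not extend.) The gap is at your very first step: you write $f^*B$. For $f^*B$ to exist as an $\mathbb R$-Cartier divisor on $Y$, no irreducible component of $Y$ may be mapped into $\Supp B$; otherwise a local equation of $B$, which is a nonzerodivisor on $X$, pulls back to a function vanishing identically on that component, hence a zerodivisor, and no blow-up repairs this, since blow-ups do not remove components of $Y$. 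Now, by Definition \ref{def2.1}(3), the images of the irreducible components of $Y$ are qlc strata of $[X,\omega]$, and even for irreducible $X$ these images need not all equal $X$: as the proof of Theorem \ref{thm2.10} in this paper illustrates, quasi-log resolutions obtained by adjunction (Theorem \ref{thm2.7}(1)) to a single stratum $X'$ have irreducible components lying over every qlc stratum contained in $X'$. In Lemma \ref{lem2.8} the hypothesis that $\Supp B$ contains no qlc center rules this out (a component of $X$ itself can never lie in $\Supp B$, since local equations of $B$ are nonzerodivisors on the reduced scheme $X$), so $f^*B$ is well defined there. Lemma \ref{lem2.9} deliberately drops that hypothesis, so the situation in which some component of $Y$ lies over $\Supp B$ is exactly the new content of the lemma, and your argument breaks down before the coefficient analysis even begins.

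Relatedly, your diagnosis of where irreducibility enters is off target. It is not merely to make $\Supp B$ a proper subset and to keep the generic point of $X$ out of $\Nqlc(X,\omega+B)$; it guarantees that the components of $Y$ \emph{dominating} $X$ cause no trouble, but it says nothing about the remaining components, and handling those (for instance, by producing a quasi-log resolution realizing the same quasi-log structure in which they can be controlled) is the actual substance of Fujino's proof of [Fuj15, Lemma 4.6]. Your coefficient-wise inequality $\lceil -(B'_Y)^{<1}\rceil - \lfloor (B'_Y)^{>1}\rfloor \leq \lceil -B_Y^{<1}\rceil$ is correct as stated, including the cases $b_i=1$, $c_i>0$ and $b_i+c_i>1$, but that bookkeeping was never the difficulty: it already appears, in identical form, in the proof of Lemma \ref{lem2.8}, and nothing in your proposal supplies the missing ingredient that separates Lemma \ref{lem2.9} from Lemma \ref{lem2.8}.
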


The following theorem was suggested by Fujino, 
which is one of the main ingredients of the proof of 
Theorem \ref{thm1.1} and Theorem \ref{thm1.3} just 
as \cite[Theorem 1]{ohsawa-takegoshi} playing a 
crucial role in \cite{as} in the original analytic 
case and \cite[Theorem]{kawakita} 
in \cite{fuji1} for log canonical pairs. 

\begin{thm}[Inversion of Adjunction, 
Osamu Fujino]\label{thm2.10}
Let $[X,\omega]$ be a qlc pair 
and $B$ be an effective $\mathbb{R}$-Cartier 
divisor on $X$ such that 
$\Supp B$ contains no qlc centers of $[X,\omega]$. 
Let $X'$ be a union of some qlc strata of 
$[X,\omega]$, Then $[X,\omega+B]$ is 
qlc in a neighborhood of $X'$ if and only 
if $[X', \omega|_{X'}+B|_{X'}]$ is qlc.
\end{thm}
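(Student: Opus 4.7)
The plan is to handle the two implications separately. The ``only if'' direction falls out of the adjunction packaged in Theorem \ref{thm2.7}(1): assuming $[X,\omega+B]$ is qlc on some open neighborhood $U$ of $X'$, the construction in the proof of Lemma \ref{lem2.8} produces a collection $\{\overline{C}_i\}$ of qlc strata for $[X,\omega+B]$ that contains every qlc stratum of $[X,\omega]$, so $X'$ sits inside $[U,(\omega+B)|_U]$ as a union of qlc strata, and applying Theorem \ref{thm2.7}(1) to this inclusion produces the qlc structure on $[X',\omega|_{X'}+B|_{X'}]$. The real content is the ``if'' direction, which I would attack by comparing non-qlc loci on a common quasi-log resolution.

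For the harder direction, I would begin by choosing a quasi-log resolution $f:(Y,B_Y)\to X$ of $[X,\omega]$ and, after further blow-ups, arranging that $(Y,B_Y+f^*B)$ is a globally embedded simple normal crossing pair and that the qlc strata of $[X,\omega]$ contained in $X'$ are exactly the images of a suitable union $S$ of strata of $(Y,B_Y)$. Adjunction for simple normal crossing pairs then promotes $f|_S:(S,B_{Y,S})\to X'$, with $B_{Y,S}=(B_Y-S)|_S$, to a quasi-log resolution of $[X',\omega|_{X'}]$, and $(S,B_{Y,S}+(f|_S)^*(B|_{X'}))=(S,(B_Y+f^*B-S)|_S)$ is the simple normal crossing pair that Lemma \ref{lem2.8} uses to build the quasi-log structure on $[X',\omega|_{X'}+B|_{X'}]$.

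The key step is then a careful accounting of the non-qlc loci on the two sides. By the formula in Lemma \ref{lem2.8},
$$\mathscr I_{\Nqlc(X,\omega+B)}=f_{*}\mathscr O_Y\bigl(\lceil -(B_Y+f^*B)^{<1}\rceil-\lfloor (B_Y+f^*B)^{>1}\rfloor\bigr),$$
and an analogous formula holds for $\Nqlc(X',\omega|_{X'}+B|_{X'})$ with $f$ replaced by $f|_S$. The hypothesis that $\Supp B$ contains no qlc center of $[X,\omega]$ forces $B_Y^{=1}$ and $\Supp f^*B$ to share no component, so every component of $(B_Y+f^*B)^{>1}$ meets $S$ transversally in the simple normal crossing sense and restricts to a component of $(B_{Y,S}+(f|_S)^*B|_{X'})^{>1}$ whenever it meets $S$. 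From this I would conclude $\Nqlc(X,\omega+B)\cap X'\subseteq \Nqlc(X',\omega|_{X'}+B|_{X'})$, and then the vanishing of the latter forces the former to be empty; since $\Nqlc(X,\omega+B)$ is closed in $X$, it follows that $[X,\omega+B]$ is qlc on an open neighborhood of $X'$.

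The main obstacle I anticipate is the construction of the resolution $f$ with the strata mapping into $X'$ assembled into a divisor $S$ that is well-behaved under simple normal crossing adjunction, together with the precise identification of the restricted pair on $S$ with the one Lemma \ref{lem2.8} produces for $[X',\omega|_{X'}+B|_{X'}]$. The hypothesis on $\Supp B$ should be crucial here, ruling out unwanted contributions to $(B_Y+f^*B)^{>1}$ that could arise if components of $B_Y^{=1}$ overlapped with $\Supp f^*B$ and spoiled the transversality needed to compare the two non-qlc loci.
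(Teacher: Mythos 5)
Your ``only if'' direction is essentially the paper's easy direction, but note that it has a small well-definedness gap: ``$[X',\omega|_{X'}+B|_{X'}]$ is qlc'' refers to the specific quasi-log structure of Remark \ref{Rem2.11} (adjunction first, then Lemma \ref{lem2.8}), not to the structure induced by adjunction from $[U,(\omega+B)|_U]$, so you must check that the two agree; the paper does this by exhibiting a common quasi-log resolution $f:(Z',\Theta_{Z'})\to X'$ with $\Theta_{Z'}=(\Delta_Z+f^*B)|_{Z'}$ and observing that it computes both structures. You gesture at such a resolution in your second paragraph, so this is patchable.

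The ``if'' direction, however, contains a genuine gap, located exactly at what you call the key step. The inclusion $\Nqlc(X,\omega+B)\cap X'\subseteq \Nqlc(X',\omega|_{X'}+B|_{X'})$ \emph{is} the hard content of inversion of adjunction, and it does not follow from the combinatorial claim that components of $(B_Y+f^*B)^{>1}$ restrict to components of the corresponding divisor on $S$ when they meet $S$. Two things go wrong: first, a component $E$ of $(B_Y+f^*B)^{>1}$ with $f(E)\cap X'\neq\emptyset$ need not meet $S$ at all, since $S$ (the union of strata mapping \emph{into} $X'$) is in general much smaller than $f^{-1}(X')$, so $E$ can hit the fiber over a point of $X'$ while avoiding $S$; second, even when $E\cap S\neq\emptyset$, your argument only controls $f(E\cap S)$, which can be strictly smaller than $f(E)\cap X'$. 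More fundamentally, both non-qlc ideals are pushforwards ($f_*$ from $Y$, $(f|_S)_*$ from $S$), and comparing them requires a surjectivity/strict-support statement for restriction of sections from $Y$ to $S$; this is Hodge-theoretic input, not divisor combinatorics---in the classical log canonical setting the corresponding step is the connectedness lemma, and no purely coefficient-counting argument is known to suffice. The paper supplies this input by applying the adjunction Theorem \ref{thm2.7}(1) to the \emph{non-qlc} quasi-log scheme $[X,\omega+B]$ and the subscheme $\widetilde X=X'\cup\Nqlc(X,\omega+B)$, after replacing $B$ by $(1-\varepsilon)B$ so that $\Delta_Z^{=1}=\Theta_Z^{=1}$ and the union of strata of $(Z,\Theta_Z)$ over $\widetilde X$ is exactly $Z'$ (Remark \ref{rem2.12}); this identifies $\mathscr I_{\Nqlc(X',\omega|_{X'}+B|_{X'})}$ with $\mathscr I_{\Nqlc(\widetilde X,(\omega+B)|_{\widetilde X})}$ and makes both implications formal. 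Your outline never invokes Theorem \ref{thm2.7} for a structure with nonempty non-qlc locus, and without that (or an equivalent vanishing-type statement) the key inclusion remains unproved. A smaller point: your adjunction formula $B_{Y,S}=(B_Y-S)|_S$ presumes $S$ is divisorial in $Y$; since strata mapping into $X'$ can have higher codimension, one must first blow up so that their union becomes a union of irreducible components of $Y$, as the paper arranges.
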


Before we prove Theorem \ref{thm2.10}, we have an 
important remark. 

\begin{rem}\label{Rem2.11} 
In Theorem \ref{thm2.10}, 
$[X', \omega|_{X'}]$ is a qlc pair by 
adjunction (see Theorem \ref{thm2.7}). 
By assumption, we see that $B|_{X'}$ contains 
no qlc centers of $[X', \omega|_{X'}]$. 
Then, by Lemma \ref{lem2.8}, 
we have a natural quasi-log structure on $[X', \omega|_{X'} 
+B|_{X'}]$. 
\end{rem}
\begin{proof}[Proof of Theorem \ref{thm2.10}] 
We take a quasi-log resolution $f:(Z, \Delta_Z)\to X$, 
where $(Z, \Delta_Z)$ is a globally embedded 
simple normal crossing pair. 
By taking some suitable 
blow-ups, we may assume that the union of all strata 
of $(Z, \Delta_Z)$ mapped to $X'$, which 
is denoted by $Z'$, 
is a union of some irreducible components of $Z$. 
We put $(K_Z+\Delta_Z)|_{Z'}=K_{Z'}+\Delta_{Z'}$ and 
$Z''=Z-Z'$. 
Without loss of generality, 
we may assume that 
$(Z, \Delta_Z+f^*B)$ is also a globally 
embedded simple normal crossing pair. 
We put $\Theta_Z=\Delta_Z+f^*B$. By adjunction, 
$f:(Z', \Delta_{Z'})\to X'$ is a quasi-log resolution 
of $[X', \omega|_{X'}]$. 
We put $K_{Z'}+\Theta_{Z'}=(K_Z+\Theta_Z)|_{Z'}$. 
First, we may assume that $[X, \omega+B]$ is qlc in a neighborhood 
of $X'$. 
Then $\Theta_{Z'}$ is a subboundary $\mathbb R$-divisor. 
By construction, 
$f:(Z', \Theta_{Z'})\to X'$ is a quasi-log resolution 
of $[X', \omega|_{X'}+B|_{X'}]$. 
Therefore, $[X', \omega|_{X'}+B|_{X'}]$ is also qlc. 
Next, we assume that 
$[X, \omega+B]$ is not qlc in a neighborhood 
of $X'$. By replacing $B$ with 
$(1-\varepsilon)B$ for $0<\varepsilon \ll 1$, we may assume that 
$\Delta_Z^{=1}=(\Delta_Z+f^*B)^{=1}=\Theta_Z^{=1}$. 
Note that 
$$
\mathscr I _{\Nqlc(X, \omega+B)}=f_*\mathscr O_Z 
(\lceil- (\Theta_Z^{<1})\rceil-\lfloor \Theta_Z^{>1}\rfloor)
$$ 
by definition. 
We put 
$$
\widetilde X =X' \cup \Nqlc (X, \omega+B)
$$ 
and consider the quasi-log structure of $[\widetilde X, 
(\omega+B)|_{\widetilde X}]$ induced by $[X, \omega+B]$. 
Then, by adjunction, we obtain 
$$
\mathscr I _{\Nqlc(\widetilde X, (\omega+B)|_{\widetilde X})}
=f_*\mathscr O_{Z'} (\lceil -(\Theta_{Z'}^{<1})\rceil -
\lfloor \Theta_{Z'}^{>1}\rfloor). 
$$  
We note that $\Delta_Z^{=1}=\Theta_Z^{=1}$ (see Remark \ref{rem2.12} 
below). 
By assumption, $\mathscr I _{\Nqlc(\widetilde X, 
(\omega+B)|_{\widetilde X})}$ is nontrivial on $X'$ 
because 
$\Nqlc (X, \omega+B)=\Nqlc (\widetilde X, (\omega+B)|_{\widetilde X})$. 
By construction, 
we can see that $f:(Z', \Theta_{Z'})\to X'$ is also 
a quasi-log resolution of 
$[X', \omega|_{X'}+B|_{X'}]$. 
Therefore, 
$$
\mathscr I_{\Nqlc(X', \omega|_{X'}+B|_{X'})}=\mathscr I_{\Nqlc 
(\widetilde X, (\omega+B)|_{\widetilde X})}
$$ 
is nontrivial. 
Thus, we obtain that $[X', \omega|_{X'}+B|_{X'}]$ is not qlc. 
\end{proof}

\begin{rem}\label{rem2.12}
In the proof of Theorem \ref{thm2.10}, 
we may assume that 
$\Delta_Z^{=1}=\Theta_Z^{=1}$ by replacing 
$B$ with $(1-\varepsilon )B$ for $0<\varepsilon \ll 1$. 
By this condition $\Delta_Z^{=1}=\Theta_Z^{=1}$, 
the union of all strata of $(Z, \Theta_Z)$ mapped to 
$\widetilde X$ is $Z'$. 
Therefore, $f: (Z', \Theta_{Z'})\to \widetilde X$ is 
a quasi-log resolution of $[\widetilde X, (\omega+B)|_{\widetilde X}]$. 
This is a key point of the proof of Theorem \ref{thm2.10}. 
\end{rem}

\section{Proof of Theorem \ref{thm1.1}}\label{sec3}

In this section, we will prove Theorem \ref{thm1.1}. 
The main result of this section is as follows. 

\begin{prop}[{\cite[Theorem 6.4]{kollar}, 
\cite[Proposition 2.1]{fuji1}}]\label{prop3.1}
Let $[X,\omega]$ be a projective 
qlc pair such that $\omega$ is an 
$\mathbb R$-Cartier 
divisor. Assume 
that $X$ is irreducible. Let $N$ be 
an ample $\mathbb R$-divisor 
on $X$ and $x\in X$ be a closed point.  
Assume that there are positive numbers $c(k)$ for 
$1\leq k\leq \dim X$ with the following properties. 
\begin{itemize}
\item[$(1)$] If $x\in Z\subset X$ is an irreducible 
{\em{(}}positive dimensional{\em{)}} subvariety, then
$$
N^{\dim Z} \cdot Z>c(\dim Z)^{\dim Z}.
$$
\item[$(2)$] The numbers $c(k)$ satisfy the inequality:
$$
\sum \limits_{k=1}\limits^{\dim X} \frac{k}{c(k)}\leq 1.
$$
\end{itemize}
Then there is an effective $\mathbb R$-Cartier divisor 
$D\sim_{\mathbb R} cN$ with $0\leq c<1$ and 
an open neighborhood $x\in X^{0} \subset X$ such that
\begin{itemize}
\item[(i)]  $[X^{0},(\omega+D)\vert_{X^{0}}]$ is qlc, and
\item[(ii)] $x$ is a qlc center of $[X^{0},(\omega+D)\vert_{X^{0}}]$.
\end{itemize}
Note that $[X, \omega+D]$ has a natural quasi-log structure by Lemma 
\ref{lem2.9}. 
\end{prop}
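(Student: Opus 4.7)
The plan is to adapt the classical Angehrn--Siu iterative cutting-down procedure to the qlc setting, with Theorem \ref{thm2.10} playing the role that Ohsawa--Takegoshi extension plays in \cite{as} and that Kawakita's inversion of adjunction plays in \cite{fuji1}. Concretely, I would construct, by induction on dimension, a strictly descending chain $X=W_{0}\supsetneq W_{1}\supsetneq\cdots\supsetneq W_{r}=\{x\}$ of qlc centers through $x$ together with effective $\mathbb{R}$-divisors $D_{i}\sim_{\mathbb{R}}\lambda_{i}N$ such that, writing $F_{i}=D_{1}+\cdots+D_{i}$, the subvariety $W_{i}$ is the minimal qlc center of $[X,\omega+F_{i}]$ through $x$, and $\lambda_{i}$ is arbitrarily close to $(\dim W_{i-1})/c(\dim W_{i-1})$. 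Setting $D=F_{r}$ and $c=\sum_{i}\lambda_{i}$ then gives $D\sim_{\mathbb{R}}cN$ with $c\leq \sum_{k=1}^{\dim X} k/c(k)+\varepsilon$, which is strictly less than $1$ by hypothesis $(2)$ after a final small shrinking. The corresponding $X^{0}$ is a neighborhood of $x$ on which $W_{r}=\{x\}$ remains the minimal qlc stratum.

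At the $i$-th step, write $k=\dim W_{i-1}$ and consider the qlc pair $[W_{i-1},(\omega+F_{i-1})|_{W_{i-1}}]$ supplied by adjunction (Theorem \ref{thm2.7}). The hypothesis $N^{k}\cdot W_{i-1}>c(k)^{k}$ together with ampleness of $N|_{W_{i-1}}$ allows a standard asymptotic Riemann--Roch/volume computation to produce an effective $\mathbb{R}$-divisor $A_{i}'\sim_{\mathbb{R}}\lambda_{i} N|_{W_{i-1}}$ on $W_{i-1}$ with $\mathrm{mult}_{x}A_{i}'>k$. I would then lift $A_{i}'$ to an effective $\mathbb{R}$-divisor $A_{i}\sim_{\mathbb{R}}\lambda_{i}N$ on $X$ with $A_{i}|_{W_{i-1}}\geq A_{i}'$, arranging by a small general perturbation that $\Supp A_{i}$ contains no qlc center of $[X,\omega+F_{i-1}]$, so that Lemma \ref{lem2.8} and Lemma \ref{lem2.9} apply. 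Let $t_{i}\in(0,1]$ be the largest $t$ for which $[W_{i-1},(\omega+F_{i-1}+tA_{i})|_{W_{i-1}}]$ remains qlc in a neighborhood of $x$; the high multiplicity of $A_{i}'$ at $x$, combined with a tie-breaking argument perturbing by an even smaller general ample divisor, forces $t_{i}<1$ and guarantees a unique minimal non-qlc center through $x$. Setting $D_{i}=t_{i}A_{i}$, Theorem \ref{thm2.10} applied to $[X,\omega+F_{i-1}]$ with $X'=W_{i-1}$ and $B=D_{i}$ transfers this failure of qlc from $W_{i-1}$ back to $X$: the minimal qlc center of $[X,\omega+F_{i}]$ through $x$ is strictly smaller than $W_{i-1}$, and we define $W_{i}$ to be it. When $\dim W_{i}=0$ we stop.

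The main obstacle is exactly this transfer step: from a divisor whose restriction destroys the qlc property of the smaller stratum $W_{i-1}$ at $x$, one must conclude that the ambient pair $[X,\omega+F_{i}]$ itself acquires a strictly smaller qlc center through $x$. In the analytic and log canonical settings this direction is classical, but in the qlc category neither Ohsawa--Takegoshi nor Kawakita is available, and Theorem \ref{thm2.10} is precisely the new ingredient that drives the induction. The remaining bookkeeping --- verifying at each stage that $\Supp D_{i}$ contains no qlc center of the current pair so that Lemma \ref{lem2.9} produces a quasi-log structure on $[X,\omega+F_{i}]$ compatible with adjunction to $W_{i-1}$, keeping the tie-breaking perturbations small enough to preserve the minimal stratum through $x$, and converting $\sum k/c(k)\leq 1$ into the strict inequality $c<1$ --- is routine and handled by the usual $\varepsilon$-perturbations.
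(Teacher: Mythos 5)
Your outer iteration (cut the minimal qlc stratum down step by step with divisors $D_{i}\sim_{\mathbb R}\tfrac{k_{i}}{c(k_{i})}N$, add them up, and use the strictness $t_{i}<1$ to keep the total coefficient below $1$) is the same scheme as the paper's Proposition \ref{prop3.4} and Proposition \ref{prop3.1}, and your use of Theorem \ref{thm2.10} to push failure of qlc-ness from the stratum $W_{i-1}$ back to the ambient space is exactly the paper's second application of inversion of adjunction. The genuine gap is one step earlier, and you have mis-located the difficulty. You assert that because $\mathrm{mult}_{x}A_{i}'>k=\dim W_{i-1}$, the pair $[W_{i-1},(\omega+F_{i-1}+A_{i}')\vert_{W_{i-1}}]$ fails to be qlc at $x$, which is what forces $t_{i}<1$. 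For qlc pairs this multiplicity criterion is simply not available at an arbitrary closed point: the singularity data of a quasi-log structure live on a quasi-log resolution $f:(Y,B_{Y})\to W_{i-1}$ which is in general far from birational (for instance, structures induced by adjunction from a larger variety have positive-dimensional fibers over $x$), so there is no ``blow up $x$ and compute the discrepancy'' argument, and the coefficients of $B_{Y}$ over the particular point $x$ are not controlled by the multiplicity of the divisor you add. The paper proves the criterion only at a \emph{general} smooth point (Lemma \ref{lem3.2}), and the entire content of Proposition \ref{prop3.3} --- the technical heart of Section \ref{sec3} --- is to get around this restriction: the divisor is constructed not at $x$ on $Z$ but on $Z\times C$ (via \cite[Lemma 12.2]{fuji3}) with high multiplicity along a section $s(C)$ through $(x,0)$ whose nearby points $s(t)$ are general in $Z$; Lemma \ref{lem3.2} then destroys qlc-ness at the general points $s(t)$, and a second, different application of Theorem \ref{thm2.10} --- applied to the product $[U\times C,\,p_{1}^{*}\omega\vert_{U}+p_{2}^{*}0+B\vert_{U\times C}]$ with stratum the central fiber $p_{2}^{*}0$ --- transports this failure down to the special point $x$. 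Without this degeneration step your induction never starts: nothing rules out that the quasi-log structure of $[W_{i-1},(\omega+F_{i-1})\vert_{W_{i-1}}]$ at $x$ absorbs arbitrarily high multiplicity, so nothing forces $t_{i}<1$.

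A secondary slip in the bookkeeping: taking $\lambda_{i}$ ``arbitrarily close to'' $k_{i}/c(k_{i})$ and concluding $c\le\sum_{k}k/c(k)+\varepsilon$ does not give $c<1$, because hypothesis $(2)$ only guarantees $\sum_{k}k/c(k)\le 1$. This is easily repaired as in the paper: the strict inequality in hypothesis $(1)$ allows you to take $\lambda_{i}=k_{i}/c(k_{i})$ exactly (since $(\lambda_{i}N)^{k_{i}}\cdot W_{i-1}>k_{i}^{k_{i}}$ then holds), and the strictness needed for $c<1$ comes from $t_{i}<1$, giving $c=\sum_{i}t_{i}\lambda_{i}<\sum_{i}\lambda_{i}\le 1$.
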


To prove this proposition, we need some preparations.

\begin{lem}\label{lem3.2}
Let $[X,\omega]$ be an irreducible qlc pair and 
$x\in X$ be a general smooth point. 
Let $B_{x}$ be an effective $\mathbb{R}$-Cartier 
divisor such that $\mathrm{mult}_{x}B_{x} > \dim_{x}X$. 
Then $[X, \omega+B_{x}]$ 
is not qlc at $x$.
\end{lem}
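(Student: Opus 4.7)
The plan is to reduce the statement to the classical blow-up calculation that proves the analogous assertion for log canonical pairs, taking advantage of the genericity of $x$ to kill the contribution of $B_Y$ at the relevant point. Fix a quasi-log resolution $f \colon (Y, B_Y) \to X$ of $[X, \omega]$. Since $X$ is irreducible and $x$ is a general smooth point of $X$, I may assume that $f$ is an isomorphism over an open neighborhood of $x$, that $y := f^{-1}(x)$ is a single smooth point of $Y$, and that no irreducible component of $B_Y$ passes through $y$. The last property holds because each prime divisor $D$ of $B_Y$ has $f(D)\subsetneq X$ (exceptional components lie over the non-isomorphism locus of $f$, and non-exceptional ones satisfy $\dim f(D)\leq \dim D<\dim X$), so $f(\Supp B_Y)$ is a proper closed subset of $X$ and can be avoided by a general $x$. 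In particular $\mathrm{mult}_y B_Y = 0$.

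Let $\pi \colon \widetilde{Y} \to Y$ be the blow-up of $y$, with exceptional divisor $E$, and set $n := \dim_x X$. Using $K_{\widetilde Y} = \pi^{*} K_Y + (n-1) E$, together with $\mathrm{mult}_y B_Y = 0$ and $\mathrm{mult}_y f^{*} B_x = \mathrm{mult}_x B_x$ (valid because $f$ is an isomorphism near $x$), a direct calculation yields
$$
\pi^{*}(K_Y + B_Y + f^{*} B_x) \;=\; K_{\widetilde Y} + \widetilde{B_Y} + \widetilde{f^{*} B_x} + \bigl( \mathrm{mult}_x B_x - (n-1) \bigr) E,
$$
where $\widetilde{B_Y}$ and $\widetilde{f^{*} B_x}$ denote strict transforms. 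The hypothesis $\mathrm{mult}_x B_x > n$ then forces the coefficient of $E$ in the new boundary divisor to be $\mathrm{mult}_x B_x - n + 1 > 1$.

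To conclude, after further blow-ups I can enhance $(\widetilde Y, \widetilde{B_Y}+\widetilde{f^{*}B_x}+(\mathrm{mult}_x B_x-n+1)E)$ into a globally embedded simple normal crossing pair, producing a genuine quasi-log resolution of $[X, \omega+B_x]$ in the sense of Lemma \ref{lem2.9}; the strict transform of $E$ survives with coefficient strictly greater than $1$. By the formula for the non-qlc ideal recalled in Lemma \ref{lem2.8}, this component contributes to $\lfloor (\cdot)^{>1}\rfloor$ and maps to $x$, so $\mathscr{I}_{\Nqlc(X,\omega+B_x)}$ is a proper ideal at $x$, and hence $[X,\omega+B_x]$ is not qlc at $x$. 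The main delicate point is the genericity step ensuring $\mathrm{mult}_y B_Y = 0$: because $B_Y$ is only a subboundary and could in principle have negative-coefficient components passing through $y$, one really has to invoke both the irreducibility of $X$ and the fact that every prime component of $B_Y$ maps to a proper closed subset of $X$. Once this is secured, what remains is the standard one-point-blow-up discrepancy calculation.
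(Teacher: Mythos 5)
There is a genuine gap at your very first reduction. You assume that, since $X$ is irreducible and $x$ is general, the quasi-log resolution $f\colon (Y,B_Y)\to X$ may be taken to be an isomorphism over a neighborhood of $x$, justified by the claim that every prime component $D$ of $B_Y$ satisfies $\dim f(D)\leq \dim D<\dim X$. This presupposes that $f$ is birational (in particular that $\dim Y=\dim X$), which is not part of Definition~\ref{def2.1} and fails in general: a quasi-log resolution is merely a proper morphism from a globally embedded simple normal crossing pair, it may have positive-dimensional fibers over \emph{every} point of $X$, and components of $B_Y$ may dominate $X$. For instance, let $X$ be smooth projective, $Y=X\times \mathbb{P}^1$, $B_Y=X\times\{0\}+X\times\{\infty\}$, and $f=p_1$; then $K_Y+B_Y\sim f^*K_X$ and $f_*\mathscr{O}_Y(\lceil -(B_Y^{<1})\rceil)=\mathscr{O}_X$, so this $f$ exhibits $[X,K_X]$ as an irreducible qlc pair whose only qlc stratum is $X$, yet $f$ is nowhere an isomorphism and both components of $B_Y$ (with coefficient one) dominate $X$. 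Since the quasi-log structure on $[X,\omega+B_x]$ furnished by Lemma~\ref{lem2.9} is induced from the \emph{given} resolution, you are not free to replace $f$ by a birational morphism, and your argument simply does not apply to such structures. (Your stated worry about negative-coefficient components is not the real danger; the real danger is positive-dimensional fibers and dominating boundary components.)

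The paper's proof addresses exactly this point, and your numerical core survives once it is transplanted to the correct geometric setting. Since $x$ is general, one may assume that every stratum of $(Y,\Supp B_Y)$ is smooth over a Zariski open neighborhood $U_x$ of $x$. One then blows up $Y$ along an irreducible component $E$ of the (possibly positive-dimensional) fiber $f^{-1}(x)$, rather than at a point. Smoothness of the strata over $U_x$ gives three facts: $E$ has codimension exactly $n=\dim_x X$ in the component $Y'$ of $Y$ containing it (its dimension is $\dim Y'-n$); no component of $\Supp B_Y$ contains $E$, since such a component is smooth over $U_x$ and its fiber over $x$ has dimension $\dim Y'-1-n<\dim E$; and $\mathrm{mult}_E f^*B_x=\mathrm{mult}_x B_x$, because $f|_{Y'}$ is smooth at the generic point of $E$. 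Consequently the exceptional divisor of the blow-up along $E$ has discrepancy $n-1$ and enters the boundary with coefficient $\mathrm{mult}_x B_x-(n-1)>1$ while mapping into $x$, so $\mathscr{I}_{\Nqlc(X,\omega+B_x)}$ is nontrivial at $x$, exactly as in your final step. In short, your blow-up and coefficient computation is the right one, but it must be carried out along a fiber component of a genuinely non-birational $f$; the claim that $f$ is an isomorphism near a general point is where your proof breaks.
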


\begin{proof} 
By Lemma \ref{lem2.9}, $[X, \omega+B_x]$ has a 
natural quasi-log structure. 
Let $f: (Y, B_{Y})\rightarrow X$ 
be a quasi-log resolution of $[X, \omega]$. 
Since $x$ is a general smooth 
point, we may assume that every 
stratum of $(Y, \Supp B_{Y})$ is 
smooth over a nonempty Zariski open 
neighborhood $U_{x}$ of $x$.
We can 
assume that $U_{x}$ is smooth. 
By taking a blow-up 
along an irreducible component $E$ of $f^{-1}(x)$, we 
can directly check that 
$[X, \omega+B_{x}]$ is not qlc at $x$ 
by $\mathrm{mult}_x B_x>\dim _xX$. 
\end{proof}

\begin{prop}\label{prop3.3}
Let $X$ be a projective 
irreducible variety with $\dim X=n$. 
Let $\omega$ be 
an $\mathbb R$-Cartier divisor on $X$. 
Assume that there exists a nonempty 
Zariski open set $U\subset X$ such that 
$[U, \omega \vert_{U}]$ is a qlc pair. 
Let $H$ be an ample $\mathbb R$-divisor 
on $X$ such that $H^{n}>n^{n}$. 
Let $x$ be a closed point of $U$ such that 
no qlc centers of $[U, \omega\vert_{U}]$ contain $x$. 
Then there is an effective $\mathbb R$-Cartier divisor 
$B_{x}$ on $X$ such that $B_{x}\sim_{\mathbb R} H$ 
and that $[U, (\omega+B_{x})\vert_{U}]$ is not qlc at $x$. 
\end{prop}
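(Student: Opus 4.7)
The plan is the Angehrn--Siu dimension-count strategy: I will construct an effective $\mathbb R$-Cartier divisor $B_x$ on $X$ with $B_x\sim_{\mathbb R} H$ and $\mathrm{mult}_x B_x>n$, and then conclude via Lemma \ref{lem3.2} applied to $[U,\omega|_U]$ that $[U,(\omega+B_x)|_U]$ fails to be qlc at $x$. Since $X$ (hence $U$) is irreducible, $[U,\omega|_U]$ is an irreducible qlc pair, and the hypothesis that $x$ lies on no qlc center of $[U,\omega|_U]$ places $x$ in the ``general smooth point'' locus needed for Lemma \ref{lem3.2}, possibly after further shrinking $U$ around $x$.

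To build $B_x$, I first reduce to a $\mathbb Q$-Cartier approximation of $H$. Write $H=\sum_{i=1}^{r} a_iH_i$ with $H_i$ very ample Cartier divisors and $a_i\in\mathbb R_{>0}$. Choose rationals $0<a_i'<a_i$ close enough to $a_i$ so that $H_0:=\sum a_i'H_i$ is ample with $H_0^n>n^n$, and pick effective divisors $D_i\in|H_i|$ not passing through $x$. Then
$$
H\sim_{\mathbb R} H_0+\sum_{i=1}^{r}(a_i-a_i')D_i,
$$
and the correction term on the right is an effective $\mathbb R$-Cartier divisor of multiplicity $0$ at $x$.

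For the ample $\mathbb Q$-Cartier piece $H_0$, pick $m\gg 0$ with $mH_0$ Cartier. Asymptotic Riemann--Roch on the projective variety $X$ gives
$$
h^0(X,\mathscr O_X(mH_0))=\frac{H_0^n}{n!}m^n+O(m^{n-1}),
$$
while imposing vanishing to order $\geq nm$ at the closed point $x$ costs at most $\binom{nm+n-1}{n}=\frac{n^n}{n!}m^n+O(m^{n-1})$ linear conditions. Because $H_0^n>n^n$, for $m$ large enough there is a nonzero $s\in H^0(X,\mathscr O_X(mH_0))$ with $\mathrm{mult}_x s>nm$. Setting $B_0=\tfrac{1}{m}\mathrm{div}(s)$ and $B_x:=B_0+\sum_{i=1}^{r}(a_i-a_i')D_i$ produces an effective $\mathbb R$-Cartier divisor with $B_x\sim_{\mathbb R} H$ and $\mathrm{mult}_xB_x=\mathrm{mult}_xB_0>n$. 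Applying Lemma \ref{lem3.2} to $B_x|_U$ on the irreducible qlc pair $[U,\omega|_U]$ then yields the desired failure of qlc at $x$.

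The main obstacle is the bookkeeping between $\mathbb R$- and $\mathbb Q$-linear equivalence and ensuring the dimension count survives the perturbation from $H$ to $H_0$; the strict inequalities $H^n>n^n$ and $\mathrm{mult}_xB_0>n$ are what make this harmless, but one must be careful to arrange the effective representative of $H-H_0$ to avoid $x$. A subsidiary point is verifying that $x$ lies in the locus where Lemma \ref{lem3.2} applies, which is precisely where the hypothesis that $x$ misses all qlc centers of $[U,\omega|_U]$ is used.
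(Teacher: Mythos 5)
Your proposal has a genuine gap at its final and crucial step: you invoke Lemma \ref{lem3.2} at the point $x$ itself, but Lemma \ref{lem3.2} is stated and proved only for a \emph{general smooth} point, and the hypothesis of Proposition \ref{prop3.3} --- that $x$ lies on no qlc center of $[U,\omega|_U]$ --- does not place $x$ in that locus, nor can ``further shrinking $U$ around $x$'' do so. The proof of Lemma \ref{lem3.2} needs $X$ to be smooth at $x$ and every stratum of $(Y,\Supp B_Y)$ to be smooth over a neighborhood of $x$, where $f\colon (Y,B_Y)\to X$ is a quasi-log resolution; only then is $\mathrm{mult}_{E}f^*B_x$ identified with $\mathrm{mult}_xB_x$ for a component $E$ of $f^{-1}(x)$, and only then does the blow-up/discrepancy count close. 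The locus where this fails is a \emph{fixed} proper closed subset of $X$, in general not contained in the union of the qlc centers, and it may well contain $x$: for instance, if $[X,\omega]$ comes from a klt pair with $X$ singular, there are no qlc centers at all, so every point --- including every singular point --- satisfies your hypothesis, while the quasi-log resolution (whose source may even have dimension larger than $\dim X$) is badly behaved over such points. No argument is given for a multiplicity criterion at such $x$, and producing non-qlc-ness at an arbitrary such point is precisely the difficulty the proposition exists to overcome.

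The paper's proof handles this by degenerating $x$ to general points: it constructs $p_2\colon X\times C\to C$ for a pointed smooth curve $(C,0)$ with a section $s$ through $(x,0)$ whose image in $X$ is a general curve, uses \cite[Lemma 12.2]{fuji3} to find $B\sim_{\mathbb R}p_1^*H$ with $\mathrm{mult}_{s(C)}B>n$ along the whole section, applies Lemma \ref{lem3.2} only at the points $s(t)$ for \emph{general} $t\in C$ (where it is legitimate), and then transfers the failure of qlc-ness from general $t$ to $t=0$ via the inversion of adjunction (Theorem \ref{thm2.10}) applied to $[U\times C,\,p_1^*\omega|_U+p_2^*0+B|_{U\times C}]$ and the qlc stratum $p_2^*0\cong U$. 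Your proposal uses neither the product construction nor Theorem \ref{thm2.10}, so the central difficulty is bypassed rather than solved. A secondary problem: your count of $\binom{nm+n-1}{n}$ conditions for vanishing in $\mathfrak m_x^{nm+1}$ is valid only if $X$ is smooth at $x$; at a singular point the Hilbert--Samuel function gives roughly $\mathrm{mult}_x(X)\cdot\frac{(nm)^n}{n!}$ conditions, so with only $H^n>n^n$ the desired section need not exist. (The harmless part of your argument --- splitting $H$ into an ample $\mathbb Q$-divisor plus an effective $\mathbb R$-divisor avoiding $x$ --- is fine, and a similar reduction appears in the paper's Proposition \ref{prop3.4}.)
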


\begin{proof}
Let us consider $X \times \mathbb{A}^{1}
\rightarrow \mathbb{A}^{1}$ and 
take a general irreducible curve $C'$ on $X \times \mathbb{A}^{1}$ 
passing through $(x,0)\in X \times \mathbb{A}^{1}$. 
Since $C'$ is a general curve,  $C'\rightarrow \mathbb{A}^{1}$ is finite.
Let $\nu: C\rightarrow C'$ be the normalization. 
By taking the 
base change of $X \times \mathbb{A}^{1}\rightarrow 
\mathbb{A}^{1}$ by $C\rightarrow C'\rightarrow \mathbb{A}^{1}$, 
we obtain $p_{2}: X \times C\rightarrow C$. By construction, 
there exists a section $s: C\rightarrow X \times C$ of $p_{2}$ 
such that $s(C)$ is passing through 
$(x,0)\in X \times C$ for some $0\in C$. 
By \cite[Lemma 12.2]{fuji3} 
(in which it was assumed that a variety should be normal, 
but the normality is not used in the proof), 
we can find an effective $\mathbb R$-Cartier divisor $B$ 
on $X \times C$ such that $B\sim_{\mathbb R} p_{1}^{*}H$, 
where $p_{1}: X \times C\rightarrow X$ is the first projection
and that $\mathrm{mult}_{s(C)}B > n$.
By shrinking $C$, we can assume 
that $B$ contains no fibers of $p_{2}$.
By shrinking $U$,  
we can further assume that $[U, \omega\vert_{U}]$ 
contains no qlc centers.
We consider the natural quasi-log structure on
$[U\times C, p_{1}^{*}\omega\vert_{U}+ 
p_{2}^{*}0+B\vert_{U\times C}]$ by Lemma \ref{lem2.8}. 
Note that $p_2^*0\cong U$ is a qlc center of this quasi-log structure. 
We assume that $[U, (\omega+B_x)\vert_{U}]$ is qlc at $x$, 
where $B_x=B|_{p_2^*0}$. 
By applying the inversion of adjunction (see Theorem \ref{thm2.10}) to 
$[U\times C, p_{1}^{*}\omega\vert_{U}+ 
p_{2}^{*}0]$, $B\vert_{U\times C}$, and 
$p_2^*0\cong U$, we see that 
$[U\times C, p_1^*\omega|_U+p_2^*0+B|_{U\times C}]$ is qlc 
in a neighborhood of $(x, 0)$ since $[U, (\omega+B_x)|_{U}]$ is 
qlc at $x$. 
Then we obtain that 
$[U\times C, p_1^*\omega|_U+p_2^*0+B|_{U\times 
C}]$ is qlc at $(s(t), t)$ if $t$ is sufficiently 
close to $0\in C$. 
Thus, by adjunction, $[U, (\omega+B|_{p_2^*t})|_U]$ 
is qlc at $s(t)$ if $t$ is 
sufficiently close to $0\in C$ and is general in $C$. 
On the other hand, $[U, (\omega+B|_{p_2^*t})|_U]$ is 
not qlc at $s(t)$ for general $t\in C$ by Lemma \ref{lem3.2}. 
This is a contradiction. 
Therefore, $[U, (\omega+B_x)|_U]$ is not 
qlc at $x$. This means that $B_x$ is a desired effective 
$\mathbb R$-Cartier divisor. 
\end{proof}

The following proposition was 
established for kawamata log terminal 
pairs in \cite[Theorem 6.7.1]{kollar} and 
for log canonical pairs in \cite[Proposition 2.7]{fuji1}. 

\begin{prop}\label{prop3.4}
Let $X$ be a projective 
irreducible variety and $\omega$ be 
an $\mathbb R$-Cartier divisor on $X$. 
Assume that there exists a nonempty 
Zariski open set $U\subset X$ such that 
$[U, \omega \vert_{U}]$ is a qlc pair.
Let $x\in U$ be a closed point and $Z$ be the minimal 
qlc stratum of $[U, \omega \vert_{U}]$ passing through $x$ 
with $k=\dim Z>0$. 
Let $H$ be an ample $\mathbb R$-divisor 
on $X$ such that $H^{k} \cdot \overline{Z}>k^{k}$, 
where $\overline {Z}$ is the closure of $Z$ in $X$. Then 
there are an effective $\mathbb R$-Cartier 
divisor $B\sim_{\mathbb R} H$, 
a real number $0<c<1$, and an 
open neighborhood $x\in X^{0}\subset U$ such that: 
\begin{itemize}
\item[$(1)$] $[X^{0},(\omega+cB)\vert_{X^{0}}]$ is qlc, and
\item[$(2)$] there is a minimal qlc stratum $Z_{1}$ of 
$[X^{0},(\omega+cB)\vert_{X^{0}}]$ passing through $x$ with 
$\dim Z_{1}<\dim Z$.
\end{itemize}
\end{prop}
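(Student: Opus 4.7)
The plan is to reduce to the closure $\overline Z$ of the minimal qlc stratum through $x$, apply Proposition \ref{prop3.3} there to produce an effective divisor $B_Z$ on $\overline Z$ that destroys qlc at $x$, lift $B_Z$ to an effective divisor on $X$, and then extract $c$ and $Z_1$ from a log canonical threshold argument driven by the inversion of adjunction Theorem \ref{thm2.10}.

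By Theorem \ref{thm2.7}(1) (adjunction) applied to $Z\subset U$, $[Z,\omega|_Z]$ is a qlc pair whose qlc centers are exactly the qlc centers of $[U,\omega|_U]$ contained in $Z$, so the minimality of $Z$ means that no proper qlc center of $[Z,\omega|_Z]$ passes through $x$. Since $H^k\cdot \overline Z>k^k$ is strict, I can fix $\lambda\in(0,1)$ so close to $1$ that $(\lambda H)^k\cdot\overline Z>k^k$ still holds. Applying Proposition \ref{prop3.3} to the data $(\overline Z,\omega|_{\overline Z},Z,\lambda H|_{\overline Z},x)$ furnishes an effective $\mathbb R$-Cartier divisor $B_Z\sim_{\mathbb R}\lambda H|_{\overline Z}$ with $[Z,(\omega+B_Z)|_Z]$ not qlc at $x$. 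Next I would lift $B_Z$ to an effective $\mathbb R$-Cartier divisor $B\sim_{\mathbb R}\lambda H$ on $X$ with $B|_{\overline Z}=B_Z$ near $x$: Serre vanishing for the ample divisor $H$ gives surjections $H^0(X,\mathcal O_X(mH))\twoheadrightarrow H^0(\overline Z,\mathcal O_{\overline Z}(mH))$ for $m\gg 0$, and a convex combination of integral lifts of $\mathbb Q$-approximations of $B_Z$ yields the required $\mathbb R$-Cartier lift; by choosing the lift sufficiently generically one arranges that $\Supp B$ avoids the finitely many qlc centers of $[U,\omega|_U]$, so that Theorem \ref{thm2.10} will apply. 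Set $B':=B/\lambda$, so that $B'\sim_{\mathbb R}H$ is effective and $\lambda B'=B$.

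Now run the lct argument on the family $[U,(\omega+tB')|_U]$, $t\in[0,1]$, equipped with the natural quasi-log structures from Lemma \ref{lem2.9}. Theorem \ref{thm2.10} applied to $X'=Z$ and the divisor $\lambda B'$ transfers the qlc-failure of $[Z,(\omega+B_Z)|_Z]$ at $x$ into qlc-failure of $[U,(\omega+\lambda B')|_U]$ at $x$. Therefore the threshold
$$
c:=\sup\bigl\{\,t\in[0,\lambda]\ :\ [X^0,(\omega+tB')|_{X^0}]\ \text{is qlc for some open}\ X^0\ni x\,\bigr\}
$$
satisfies $0<c\leq\lambda<1$. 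Working with a common quasi-log resolution that is log smooth with respect to $f^*B'$, the condition that all strata have coefficient $\leq 1$ is closed in $t$, so $[X^0,(\omega+cB')|_{X^0}]$ is itself qlc for an appropriate open $X^0\ni x$, and the defining property of $c$ forces a new qlc stratum $Z_1\ni x$ to emerge at $t=c$. Applying Theorem \ref{thm2.10} at $t=c$ together with Theorem \ref{thm2.7}(1), this $Z_1$ can be chosen as a proper qlc stratum of $[Z,(\omega+cB_Z)|_Z]$ through $x$; by the minimality of $Z$ inside $[U,\omega|_U]$ we must have $Z_1\subsetneq Z$, hence $\dim Z_1<\dim Z$, yielding the proposition with the data $(B,c,X^0,Z_1)$.

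The main obstacle will be the lifting step: producing an effective $\mathbb R$-Cartier extension of $B_Z$ to $X$ that is $\mathbb R$-linearly equivalent to $\lambda H$ and whose support avoids every qlc center of $[U,\omega|_U]$ (so that the hypothesis of Theorem \ref{thm2.10} is met along the whole family). A secondary technical point is verifying that the new qlc stratum appearing at the lct threshold genuinely lives inside $Z$, and therefore has strictly smaller dimension; this is precisely where Theorem \ref{thm2.10} is indispensable, matching the lct at $x$ on $X^0$ with the lct at $x$ on $Z$, and it is here that the qlc argument diverges from the simpler klt and lc cases of \cite[Theorem 6.7.1]{kollar} and \cite[Proposition 2.7]{fuji1}.
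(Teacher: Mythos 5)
Your proposal is correct and follows essentially the same route as the paper's proof: restrict to $\overline{Z}$ and apply Proposition \ref{prop3.3} there, lift the resulting divisor to $X$ by Serre vanishing, transfer the failure of qlc-ness upstairs by the inversion of adjunction (Theorem \ref{thm2.10}) localized at $x$, and take the threshold to obtain $c$ and a strictly smaller stratum $Z_1$. The lifting step you flag as the main obstacle is resolved in the paper exactly as you anticipate --- the divisor produced by Proposition \ref{prop3.3} comes, via \cite[Lemma 12.2]{fuji3}, with a built-in decomposition $F_Z=F_1+\sum_{i\geq 2}a_iF_i$, $F_i\sim_{\mathbb Q}mH_i|_{\overline{Z}}$, matching a decomposition $H=H_1+\sum_{i\geq 2}a_iH_i$ into ample $\mathbb Q$-pieces, so each piece lifts integrally --- while your genericity claim needs the paper's small supplement of shrinking $U$ around $x$ (discarding the qlc centers not containing $x$, all remaining ones contain $Z\not\subset\Supp B$), since genericity of the lift cannot prevent the fixed restriction $B|_{\overline{Z}}=B_Z$ from containing qlc centers lying inside $Z$.
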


\begin{proof}
Since $[U, \omega \vert_{U}]$ is a qlc pair, 
$[Z, \omega|_Z]$ has  a qlc structure by adjunction (see 
Theorem \ref{thm2.7}). 
Note that $Z$ is normal at $x$ because $Z$ is minimal. 
Of course, no qlc centers of $[Z, \omega|_Z]$ contain $x$. 
By Proposition \ref{prop3.3}, there is 
an effective $\mathbb R$-Cartier 
divisor $F_{Z}\sim_{\mathbb R} mH\vert_{\overline {Z}}$  
such that $[Z,(\omega+\frac{1}{m}F_{Z})\vert_{Z}]$ is 
not qlc at $x$. 
Furthermore, as in \cite[Lemma 12.2]{fuji3}, 
we can assume that $H=H_{1}+a_{2}H_{2}+\cdots
+a_{t}H_{t}$ where $H_{1}$ is an ample 
$\mathbb Q$-divisor such that 
$H_{1}^{k} \cdot \overline{Z}>k^{k}$, $a_{i}$ is a positive 
real number 
and $H_{i}$ is an ample Cartier divisor for every $i\geq 2$, 
and that $F_{Z}
=F_{1}+a_{2}F_{2}+\cdots+a_{t}F_{t}$ 
with $F_{i}\sim_{\mathbb Q} mH_{i}\vert_{\overline {Z}}$ 
for every $i$. 
By replacing $m$ and $F_Z$ with $mk$ and 
$kF_Z$ for some large positive 
integer $k$, we can take $m$ 
as large as we want. 
Especially, for every $i$, we can 
find $m$ 
such that $mH_{i}$ is an ample Cartier divisor 
and that $F_i\sim mH_i|_{\overline Z}$ for every $i$. 
We may further assume that 
$$
H^{1}(X, \mathscr{I}_{\overline {Z}}\otimes \mathscr{O}_{X}(mH_{i}))=0
$$
for every $i$ 
by Serre's vanishing theorem, 
where $\mathscr{I}_{\overline {Z}}$ is the 
ideal sheaf of $\overline {Z}$ on 
$X$, and that $\mathscr I_{\overline Z}\otimes 
\mathscr O(mH_i)$ is globally generated for every $i$. 
By the following short exact sequence:
$$
0\rightarrow \mathscr{I}_{\overline {Z}}\otimes 
\mathscr{O}_{X}(mH_{i}) \rightarrow  
\mathscr{O}_{X}(mH_{i}) \rightarrow \mathscr{O}
_{\overline{Z}}(mH_{i}) \rightarrow 0, 
$$
we obtain that the natural restriction map 
$$H^{0}(X, \mathscr{O}_{X}(mH_{i}))
\rightarrow H^{0}(\overline {Z}, \mathscr{O}_{\overline{Z}}(mH_{i}))$$ 
is surjective. Therefore, we can take $D_{i}\in \left| 
mH_{i} \right|$ on $X$ such that 
$D_{i}\vert_{\overline{Z}}=F_{i}$ for every $i$. 
We put $F=D_{1}+a_{2}D_{2}+
\cdots+a_{t}D_{t}$. 
Then $F\vert_{\overline {Z}}=F_{Z}$ and 
$F\sim_{\mathbb R}mH$. 
Since $Z$ is a minimal qlc 
stratum of $[U, \omega|_U]$ passing through $x$, 
in a neighborhood 
$x\in X^{0}\subset U$, we may assume that $\Supp F$ 
contains no qlc centers of 
$[U, \omega \vert_{U}]$. By the inversion of adjunction (see 
Theorem \ref{thm2.10}), 
$[X^{0},(\omega+\frac{1}{m}F)\vert_{X^{0}}]$ 
is not qlc at $x$. 
Since we assumed that 
$\mathscr I_{\overline Z}\otimes \mathscr O_X(mH_{i})$ is 
globally generated for every $i$, by 
choosing $D_i$ general for every $i$, 
we obtain that 
$[X^{0},(\omega+\frac{1}{m}F)\vert_{X^{0}}]$ 
is qlc on $X^{0} \setminus Z$. 
By the above argument, we have 
constructed an $\mathbb R$-Cartier 
divisor $F\sim_{\mathbb R}mH$ on $X$ 
such that
\begin{itemize}
\item[(1)] $F\vert_{\overline {Z}}=F_{Z}$;
\item[(2)] $[X^{0},(\omega+\frac{1}{m}F)
\vert_{X^{0}}]$ is qlc on $X^{0} \setminus Z$;
\item[(3)] $[X^{0},(\omega+\frac{1}{m}F)
\vert_{X^{0}}]$ is qlc at the generic point of $Z$;
\item[(4)] $[X^{0},(\omega+\frac{1}{m}F)
\vert_{X^{0}}]$ is not qlc at $x \in Z$.
\end{itemize}
We put $B=\frac{1}{m}F$. Let $c$ be 
the maximal real number such that 
$[X^{0},(\omega+cB)\vert_{X^{0}}]$ is 
qlc at $x$. Then, after shrinking $X^{0}$ 
further, we have a new minimal qlc center 
$Z_{1}$ of $[X^{0},(\omega+cB)\vert_{X^{0}}]$ 
passing through $x$. 
Note that $Z$ (after restriction) is also a qlc 
center in this new qlc structure too. Therefore, 
we have that $x\in Z_{1}\subset Z$ and $\dim Z_{1}<\dim Z$.
\end{proof}

Now, we are ready to prove Proposition \ref{prop3.1}. 

\begin{proof}[Proof of Proposition \ref{prop3.1}] 
Let $Z_{1}$ be the minimal qlc stratum of $[X, \omega]$ 
passing 
through $x$. If $\dim Z_{1}=0$, then it is 
done. If $\dim Z_{1}>0$, then, by 
Proposition \ref{prop3.4}, we 
can find $x\in D_{1}\sim_{\mathbb R} 
\frac{k_{1}}{c(k_{1})}N$,  $0<c_{1}<1$ and an open 
neighborhood $X^{0}$ of $x$ such 
that $[X^{0},(\omega+c_{1}D_{1})\vert_{X^{0}}]$ 
is qlc and $k_{2}=\dim Z_{2}<k_{1}$ where $Z_{2}$ is 
the minimal qlc strutum of $[X^{0},(\omega+c_{1}D_{1})\vert_{X^{0}}]$ 
passing through $x$. 
By Lemma \ref{lem2.9}, we can consider 
the natural quasi-log structure 
$[X,\omega+c_{1}D_{1}]$, which coincides 
with the one 
of $[X^{0},(\omega+c_{1}D_{1})\vert_{X^{0}}]$ 
when restricting to $X^{0}$.
Repeat this argument 
by Proposition \ref{prop3.4}. 
Finally, we get a sequence $\dim X
\geq k_{1}>k_{2}>\cdots >k_{t}>0$, where $k_{i}\in \mathbb{Z}$ with the following properties: 
\begin{itemize}
\item[(1)] there is an effective 
$\mathbb{R}$-Cartier divisor 
$x\in D_{i}\sim_{\mathbb R} \frac{k_{i}}{c(k_{i})}N$ for every $i$;
\item[(2)] there is a real number 
$0<c_{i}<1$ for every $i$; 
\item[(3)] $[X,\omega+\sum \limits_{i=1}\limits^{t} c_{i}D_{i}]$ 
is qlc in a neighborhood $X^{0}$ of $x$; 
\item[(4)] $x$ is a qlc center of 
$[X,\omega+\sum \limits_{i=1}\limits^{t} c_{i}D_{i}]$.
\end{itemize}
We put $D=\sum \limits_{i=1}\limits^{t} c_{i}D_{i}$. 
Then $D$ has the desired properties. 
Note that 
$0\leq c=\sum \limits_{i=1}\limits^{t} 
c_{i}\frac{k_{i}}{c(k_{i})}<1$ and $D \sim_{\mathbb R} cN$.
\end{proof}

\begin{proof}[Proof of Theorem \ref{thm1.1}]
By Remark \ref{rem1.2}, 
we may assume that $X$ is irreducible. 
Let $D$ be an $\mathbb{R}$-Cartier divisor constructed in 
Proposition \ref{prop3.1}. 
By Lemma \ref{lem2.9}, we still have a 
natural quasi-log structure on $[X,\omega+D]$. 
Then, by the construction of $X^{0}$ in Proposition \ref{prop3.1}, 
$x$ is still a qlc center of this quasi-log structure on 
$[X,\omega+D]$ and is disjoint from $\Nqlc(X,\omega+D)$. 
We consider $\widetilde{X}=x \cup \Nqlc(X,\omega+D)$. 
By adjunction (see Theorem \ref{thm2.7}), 
$\widetilde{X}$ has a quasi-log structure induced by 
$[X, \omega+D]$. Now we have the following 
short exact sequence:
$$
0\rightarrow \mathscr{I}_{\widetilde{X}} 
\rightarrow \mathscr{O}_{X} 
\rightarrow \mathscr{O}_{\widetilde{X}} \rightarrow 0. 
$$
Since $M-(\omega+D)\sim_{\mathbb R} (1-c)N$ is ample, 
$H^{1}(X, \mathscr{I}_{\widetilde{X}} \otimes \mathscr{O}_{X}(M))=0$ 
by the vanishing theorem in Theorem \ref{thm2.7}. 
Therefore, the natural restriction map 
$$H^{0}(X, 
\mathscr{O}_{X}(M))\rightarrow H^{0} 
(\widetilde{X}, \mathscr{O}_{\widetilde{X}}(M)) $$ 
is surjective. 
Note that $x\cap \Nqlc(X,\omega+D)=\emptyset$, Thus we obtain that 
$$
H^{0}(X, \mathscr{O}_{X}(M))\rightarrow 
H^{0}(\widetilde{X}, 
\mathscr{O}_{\widetilde{X}}(M))\rightarrow 
H^{0}(x,\mathscr{O}_{\widetilde{X}}(M))=
\mathscr O_{\widetilde X}(M)\otimes \mathbb{C}(x)
$$ 
is surjective. This is what we wanted.
\end{proof}

As a direct consequence of Theorem \ref{thm1.1}, 
we have: 

\begin{cor}[Effective freeness for semi-log canonical 
pairs] \label{cor3.5}
Let $(X, \Delta)$ be a projective semi-log canonical pair.
Let $M$ be a Cartier divisor on $X$ such that 
$N=M-(K_X+\Delta)$ is ample. 
Let $x\in X$ be a closed point. 
We assume that there are positive 
numbers $c(k)$ with the following properties.
\begin{itemize}
\item[$(1)$] If $x\in Z\subset X$ is an irreducible {\em{(}}positive dimensional{\em{)}} subvariety, then
$$
N^{\dim Z} \cdot Z>c(\dim Z)^{\dim Z}.
$$
\item[$(2)$] The numbers $c(k)$ satisfy the inequality:
$$
\sum \limits_{k=1}\limits^{\dim X} \frac{k}{c(k)}\leq 1.
$$
\end{itemize}
Then $\mathscr O_X(M)$ has a global section not vanishing at $x$.
\end{cor}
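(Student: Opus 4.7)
The proof will be a direct application of Theorem \ref{thm1.1} to the canonical quasi-log structure attached to $(X, \Delta)$. First, I will invoke \cite[Theorem 1.1]{fuji6}, which asserts that any quasi-projective semi-log canonical pair $(X, \Delta)$ carries a natural quasi-log structure on $[X, K_X+\Delta]$ with only qlc singularities, compatible with the original slc structure. Since $(X, \Delta)$ is projective, it is in particular quasi-projective, and $K_X+\Delta$ is $\mathbb{R}$-Cartier by the definition of an slc pair; so this quasi-log structure is indeed available.

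Next, I will set $\omega = K_X + \Delta$ and observe that the hypotheses of Theorem \ref{thm1.1} are transcribed word-for-word from those of the corollary: the ample $\mathbb{R}$-Cartier divisor $N = M - \omega$ agrees with $N = M - (K_X+\Delta)$ in the statement, and the numerical conditions (1) and (2) on the constants $c(k)$ depend only on the underlying scheme $X$, the closed point $x$, and the intersection numbers of $N$ with irreducible subvarieties through $x$, not on the chosen quasi-log structure. Consequently Theorem \ref{thm1.1} applies and produces a global section of $\mathscr{O}_X(M)$ that does not vanish at $x$, which is the desired conclusion.

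The only substantive input beyond Theorem \ref{thm1.1} itself is the existence of the compatible quasi-log structure on an slc pair, provided by \cite[Theorem 1.1]{fuji6}; once this is in hand there is no further obstacle. I expect that the reason this deduction is so painless is precisely the reason, emphasized in the introduction, for working in the quasi-log category: the slc case is allowed to be reducible and non-normal, yet the \emph{quasi-log} formulation already treats such $X$ on an equal footing with irreducible qlc pairs, so no additional reduction or normalization argument is needed at this final step.
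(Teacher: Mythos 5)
Your proposal is correct and follows exactly the same route as the paper: invoke \cite[Theorem 1.1]{fuji6} to endow the projective (hence quasi-projective) slc pair $(X,\Delta)$ with its compatible quasi-log structure with only qlc singularities, then apply Theorem \ref{thm1.1} with $\omega = K_X+\Delta$. The additional remarks about the hypotheses being purely numerical and about the quasi-log category handling reducible, non-normal $X$ are accurate and consistent with the paper's own emphasis.
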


\begin{proof} 
By \cite[Theorem 1.1]{fuji6}, 
we see that $[X, K_X+\Delta]$ has a natural 
quasi-log structure with only quasi-log canonical singularities, 
which is compatible with the original semi-log canonical structure 
of $(X, \Delta)$. 
For the details, see \cite{fuji6}. 
Then, by Theorem \ref{thm1.1}, 
we know that $\mathscr O_X(M)$ has a global section not 
vanishing at $x$. 
\end{proof}

\section{Proof of Theorem \ref{thm1.3}}\label{sec4}

In this section, we will prove Theorem \ref{thm1.3}. 
By Remark \ref{rem1.4}, 
we may assume that $X=W_{1} \cup W_{2}$, 
where $W_{1}$ (resp.~$W_{2}$) is the minimal 
qlc stratum of $[X,\omega]$ passing through 
$x_{1}$ (resp.~$x_{2}$) with $\dim W_{1} \leq \dim W_{2}$.
We put $V=W_{1} \cap W_{2}$. 
Then we obtain that either $V \subsetneq W_{2}$ or $V= W_{2}$. 
If $V \subsetneq W_{2}$, then $V$ 
is a union of qlc centers of $W_{2}$ and $x_{2} \notin V$ 
by \cite[Proposition 4.8]{ambro} (see also \cite{fuji4}). 
 
First, we slightly generalize Proposition \ref{prop3.4} as follows. 

\begin{prop}\label{prop4.1}
Let $X$ be a projective 
irreducible variety and $\omega$ be 
an $\mathbb R$-Cartier divisor on $X$. 
Assume that there exists a nonempty 
Zariski open set $U\subset X$ such that 
$[U, \omega \vert_{U}]$ is a qlc pair.
Let $x\in U$ be a closed point and $Z$ be the minimal 
qlc stratum of $[U, \omega \vert_{U}]$ passing through $x$ 
with $k=\dim Z>0$. Let $V$ be a qlc center of 
$[U, \omega \vert_{U}]$ disjoint from $x$.
Let $H$ be an ample $\mathbb R$-divisor 
on $X$ such that $H^{k} \cdot \overline{Z}>k^{k}$, 
where $\overline {Z}$ is the closure of $Z$ in $X$. Then 
there are an effective $\mathbb R$-Cartier 
divisor $B\sim_{\mathbb R} H$, 
a real number $0<c<1$, and an 
open neighborhood $x\in X^{0}\subset U$ such that: 
\begin{itemize}
\item[$(1)$] $[X^{0},(\omega+cB)\vert_{X^{0}}]$ is qlc, and
\item[$(2)$] there is a minimal qlc stratum $Z_{1}$ of 
$[X^{0},(\omega+cB)\vert_{X^{0}}]$ passing through $x$ with 
$\dim Z_{1}<\dim Z$.
\item[$(3)$] $V$ is still a qlc stratum of 
$[X^{0},(\omega+cB)\vert_{X^{0}}]$ disjoint from
$x$ after restrition.
\end{itemize}
\end{prop}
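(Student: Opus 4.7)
The plan is to run the proof of Proposition~\ref{prop3.4} essentially verbatim to obtain conditions (1) and (2), and then to exploit genericity in the construction to obtain the new condition (3). Since $V$ is disjoint from $x$ by hypothesis and all the constructive steps take place at $x$ or along $\overline{Z}$, it is plausible that by choosing divisors generically we can keep $V$ away from the support of the perturbation.

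First, I would construct $B$ and $c$ exactly as in the proof of Proposition~\ref{prop3.4}. Namely, apply Proposition~\ref{prop3.3} to $[Z,\omega|_Z]$ at $x$ to obtain an effective $\mathbb R$-Cartier divisor $F_Z\sim_{\mathbb R} mH|_{\overline Z}$ such that $[Z,(\omega+\frac{1}{m}F_Z)|_Z]$ is not qlc at $x$; decompose $H=H_1+a_2 H_2+\cdots +a_t H_t$ with $mH_i$ ample Cartier; lift each $F_i$ to $D_i\in|mH_i|$ on $X$ via the surjection coming from Serre vanishing applied to $\mathscr I_{\overline Z}\otimes\mathscr O_X(mH_i)$; and set $F=D_1+a_2D_2+\cdots+a_tD_t$, $B=\frac{1}{m}F$. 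Taking the maximal $c\in(0,1)$ for which $[X^0,(\omega+cB)|_{X^0}]$ remains qlc at $x$, and invoking inversion of adjunction (Theorem~\ref{thm2.10}), yields conditions (1) and (2) and the new minimal qlc stratum $Z_1\subsetneq Z$ through $x$.

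For condition (3), it suffices to arrange $V\not\subset\Supp B$. Once this holds, Lemma~\ref{lem2.8} applied to $[X^0,\omega|_{X^0}]$ and $cB|_{X^0}$ shows that the new quasi-log structure on $[X^0,(\omega+cB)|_{X^0}]$ still has the irreducible components of $V\cap X^0$ as qlc strata, and these are automatically disjoint from $x$ because $V$ is. To secure $V\not\subset\Supp B$ I invoke two layers of genericity. Since $\mathscr I_{\overline Z}\otimes\mathscr O_X(mH_i)$ has been arranged to be globally generated, the fibre of $H^0(X,\mathscr O_X(mH_i))\to H^0(\overline Z,\mathscr O_{\overline Z}(mH_i))$ over $F_i$ is an affine space on which the locus of sections vanishing along $V$ is a proper linear subspace whenever $V\not\subset\overline Z$; a general lift $D_i$ then avoids $V$. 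In the opposite case $V\subset\overline Z$, one instead arranges $V\not\subset\Supp F_Z$ already at the level of Proposition~\ref{prop3.3}, by taking the curve $C'\subset X\times\mathbb A^1$ and the divisor on $X\times C$ produced by \cite[Lemma~12.2]{fuji3} in general position subject to the prescribed multiplicity along $s(C)$.

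The main obstacle is the second of these genericity statements: one has to verify that \cite[Lemma~12.2]{fuji3} leaves enough freedom to avoid a fixed proper closed subscheme of $\overline Z$ which is disjoint from $x$. This is a standard Bertini-type observation and is really the only substantive addition to the proof of Proposition~\ref{prop3.4}; once it is in place, condition (3) follows at once from Lemma~\ref{lem2.8}, and the remaining assertions are covered by the arguments already present in Propositions~\ref{prop3.3} and~\ref{prop3.4}.
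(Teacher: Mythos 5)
Your overall strategy is exactly the paper's: the paper's entire proof of Proposition \ref{prop4.1} consists of saying that in the proof of Proposition \ref{prop3.4} one can choose $F$ so general that $V\not\subset \Supp B$, and that $V$ then survives as a qlc stratum of the new quasi-log structure. Your Case 1 (when $V\not\subset\overline{Z}$), argued by moving the lifts $D_i$ within the fibres of $H^{0}(X,\mathscr O_X(mH_i))\to H^{0}(\overline Z,\mathscr O_{\overline Z}(mH_i))$ using the global generation of $\mathscr I_{\overline Z}\otimes\mathscr O_X(mH_i)$, is correct and is precisely what the paper's phrase ``choose $F$ so general'' actually delivers.

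Two caveats. First, the paper concludes with Lemma \ref{lem2.9} rather than Lemma \ref{lem2.8}, and that is the better tool: Lemma \ref{lem2.8} requires that $\Supp B$ contain no qlc centers of the pair, so on your route you must also arrange matters for the qlc centers strictly contained in $V$ (avoid them too, or shrink $X^{0}$ to delete them while keeping the generic point of $V$), whereas Lemma \ref{lem2.9} only needs the generic point of $V$ to lie outside $\Supp B$, since the two structures agree there. Second, and more substantively, your Case 2 ($V\subset\overline Z$) is the real crux, and describing it as ``a standard Bertini-type observation'' understates it. The divisors coming from Proposition \ref{prop3.3} via \cite[Lemma 12.2]{fuji3} lie in the linear subsystem of members with multiplicity $>k$ along $s(C)$; this subsystem is far from free, Bertini says nothing about its base locus, and that base locus can genuinely be forced to contain curves through $x$ that meet $V$: if $\Gamma\ni x$ is a curve with $H''\cdot\Gamma\le k$ (where $H''$ is the ample divisor fed into Proposition \ref{prop3.3}), then every effective $B\sim_{\mathbb R}H''$ with $\mathrm{mult}_xB>k$ satisfies $B\cdot\Gamma>k\ge H''\cdot\Gamma$ unless $\Gamma\subset\Supp B$, so no genericity can avoid $V$ if $V$ lies on such a curve. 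A repair is available --- in Proposition \ref{prop3.4} the integer $m$ may be taken as large as one wishes, which eliminates all such low-degree curves, and a Seshadri-constant/vanishing argument on the blow-up, carried out relatively over $C$ so that the avoidance holds for the specialization $B|_{p_2^{*}0}$ and not merely for the generic fibre, then shows the subsystem does not have $V$ in its base locus --- but this is a genuine argument, not Bertini. In fairness, this is exactly the point the paper's own two-sentence proof also leaves unaddressed; your write-up at least has the merit of isolating where the difficulty sits.
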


\begin{proof} 
In the proof of Proposition \ref{prop3.4}, 
we can choose $F$ so general that $V$ is not contained 
in $\Supp B$, where $B=\frac{1}{m}F$. 
Then, by Lemma \ref{lem2.9}, $V$ is still a qlc center 
of the new quasi-log structure 
of $[X^0, (\omega+cB) \vert_{X^0}]$.
\end{proof}

Following the same line as in 
Section \ref{sec3}, we have a generalization of 
Proposition \ref{prop3.1}. 
The proof of Proposition \ref{prop4.2} is obvious.

\begin{prop}\label{prop4.2}
Let $[X,\omega]$ be a projective 
qlc pair such that $\omega$ is an 
$\mathbb R$-Cartier 
divisor. Assume 
that $X$ is irreducible. Let $N$ be 
an ample $\mathbb R$-divisor 
on $X$ and $x\in X$ be a closed point.  
Let $V$ be a qlc center of 
$[X,\omega]$ disjoint from $x$.
Assume that there are positive numbers $c(k)$ for 
$1\leq k\leq \dim X$ with the following properties. 
\begin{itemize}
\item[$(1)$] If $x\in Z\subset X$ is an irreducible 
{\em{(}}positive dimensional{\em{)}} subvariety, then
$$
N^{\dim Z} \cdot Z>c(\dim Z)^{\dim Z}.
$$
\item[$(2)$] The numbers $c(k)$ satisfy the inequality:
$$
\sum \limits_{k=1}\limits^{\dim X} \frac{k}{c(k)}\leq 1.
$$
\end{itemize}
Then there is an effective $\mathbb R$-Cartier divisor 
$D\sim_{\mathbb R} cN$ with $0\leq c<1$ and 
an open neighborhood $x\in X^{0} \subset X$ such that
\begin{itemize}
\item[(i)]  $[X^{0},(\omega+D)\vert_{X^{0}}]$ is qlc, and
\item[(ii)] $x$ is a qlc center of $[X^{0},(\omega+D)\vert_{X^{0}}]$.
\item[(iii)] $V$ is still a qlc center of 
$[X^{0},(\omega+D)\vert_{X^{0}}]$ disjoint from 
$x$ after restriction.
\end{itemize}
Note that $[X, \omega+D]$ has a 
natural quasi-log structure by Lemma 
\ref{lem2.9} and $V$ is a qlc center of $[X, \omega+D]$.
\end{prop}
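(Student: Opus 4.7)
The plan is to mimic the proof of Proposition \ref{prop3.1} line by line, replacing each application of Proposition \ref{prop3.4} by Proposition \ref{prop4.1}, so that the auxiliary qlc center $V$ is carried along unchanged through every iteration. No new ideas are needed beyond the bookkeeping built into Proposition \ref{prop4.1}(3).

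First I would let $Z_1$ denote the minimal qlc stratum of $[X,\omega]$ passing through $x$. If $\dim Z_1=0$, then $x$ is itself a qlc center and the conclusion holds trivially with $D=0$, $c=0$, $X^0=X$; condition (iii) is immediate from the hypothesis that $V$ is disjoint from $x$. Otherwise, setting $k_1=\dim Z_1>0$, I would apply Proposition \ref{prop4.1} to the ample divisor $H=\frac{k_1}{c(k_1)}N$, which satisfies $H^{k_1}\cdot\overline{Z}_1>k_1^{k_1}$ by hypothesis (1). This produces an effective $\mathbb R$-divisor $D_1\sim_{\mathbb R}\frac{k_1}{c(k_1)}N$, a real number $0<c_1<1$, and an open neighborhood $X^0\ni x$ on which $[X^0,(\omega+c_1D_1)|_{X^0}]$ is qlc and admits a strictly smaller minimal qlc stratum $Z_2$ through $x$, while $V$ remains a qlc center disjoint from $x$ by Proposition \ref{prop4.1}(3). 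Lemma \ref{lem2.9} then extends this quasi-log structure to all of $[X,\omega+c_1D_1]$.

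I would then iterate. Each step yields a triple $(D_i,c_i,Z_{i+1})$ with $D_i\sim_{\mathbb R}\frac{k_i}{c(k_i)}N$, $0<c_i<1$, and $\dim Z_{i+1}<\dim Z_i$. Because the $k_i$ form a strictly decreasing sequence of positive integers bounded above by $\dim X$, the process terminates after finitely many steps $t$, at which point $x$ is a qlc center of $[X^0,(\omega+\sum_{i=1}^t c_iD_i)|_{X^0}]$. Setting $D=\sum_{i=1}^t c_iD_i$ and $c=\sum_{i=1}^t c_i\frac{k_i}{c(k_i)}$, we obtain $D\sim_{\mathbb R}cN$, and the inequality $c_i<1$ combined with hypothesis (2) gives $0\le c<1$. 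Conditions (i) and (ii) are then immediate, and (iii) holds because Proposition \ref{prop4.1}(3) preserves the property ``$V$ is a qlc center disjoint from $x$'' at every stage. The only point requiring any attention is that $V$ is never absorbed into $\Supp D_i$; but this is exactly what is arranged in the proof of Proposition \ref{prop4.1} via the general-position choice of the auxiliary divisor $F$, so there is no substantive obstacle and, as the author remarks, the argument is essentially formal.
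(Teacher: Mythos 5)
Your proposal is correct and is exactly the argument the paper intends: the paper states that Proposition \ref{prop4.2} follows "along the same line as in Section \ref{sec3}" (i.e., by repeating the proof of Proposition \ref{prop3.1} with Proposition \ref{prop3.4} replaced by Proposition \ref{prop4.1}), and your write-up carries this out, including the correct verification that $c<1$ via hypothesis (2) and the preservation of $V$ at each step via Proposition \ref{prop4.1}(3) and Lemma \ref{lem2.9}.
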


We give a proof of Theorem \ref{thm1.3} when $W_1\cap W_2
\subsetneq W_2$. 

\begin{proof}[Proof of Theorem \ref{thm1.3} when 
$V=W_1\cap W_2\subsetneq W_2$]
This proof is essentially the same 
as the proof of  Theorem \ref{thm1.1}. 
Here, we will prove that 
$\mathscr{I}_{W_{1}} \otimes  \mathscr O_X(M)$ 
has a global section not vanishing at $x_{2}$. 
Note that such a section obviously separates 
$x_{1}$ and $x_{2}$. 
Since we have a natural isomorphism 
$$H^{0}(X, 
\mathscr{I}_{W_{1}} \otimes \mathscr{O}_{X}(M)) \cong  
H^{0} (W_{2}, \mathscr{I}_{V}\otimes \mathscr{O}_{W_{2}}(M)) 
$$ 
by $X=W_1\cup W_2$ and $V=W_1\cap W_2$, 
we only need to prove that
$\mathscr{I}_{V}\otimes\mathscr{O}_{W_{2}}(M)$ has a 
global section not vanishing at $x_{2}$.
Let us consider the qlc pair 
$[W_{2},\omega \vert_{W_{2}}]$ by adjunction. 
We take an $\mathbb{R}$-Cartier divisor 
$D$ on $W_2$ constructed as in 
Proposition \ref{prop4.2}. Then 
$x_{2}$ is a qlc center of the induced new 
quasi-log structure on 
$[W_{2},\omega \vert_{W_{2}}+D]$ and 
is disjoint from $\Nqlc(W_{2},\omega \vert_{W_{2}}+D)$. 
Note that
$V$ is still a qlc center of this 
new quasi-log structure.
We put $\widetilde{W}=x_{2}\cup V \cup 
\Nqlc(W_{2},\omega \vert_{W_{2}}+D)$.
By adjunction (see Theorem \ref{thm2.7}), 
$\widetilde{W}$ has a quasi-log structure induced by 
$[W_{2},\omega \vert_{W_{2}}+D]$. Now we have the following 
short exact sequence:
$$
0\rightarrow  
\mathscr{I}_{\widetilde{W}} \rightarrow 
 \mathscr{O}_{W_{2}} 
\rightarrow  
\mathscr{O}_{\widetilde{W}} \rightarrow 0. 
$$
Since $M-(\omega+D)\sim_{\mathbb R} (1-c)N$ is ample,
$H^{1}(W_{2}, \mathscr{I}_{\widetilde{W}} 
\otimes \mathscr{O}_{W_{2}}(M))=0$ 
by the vanishing theorem in Theorem \ref{thm2.7}. 
Therefore, the natural restriction map 
$$H^{0}(W_{2}, 
 \mathscr{O}_{W_{2}}(M))\rightarrow 
H^{0} (\widetilde{W}, 
\mathscr{O}_{\widetilde{W}}(M)) 
$$ 
is surjective. 
We put $\widetilde{V}= \Nqlc(X,\omega+D) \cup V$. 
Then $x_{2} \cap \widetilde{V} =\emptyset$, 
Thus we obtain that 
$$ 
H^{0}(W_{2}, 
 \mathscr{O}_{W_{2}}(M))\rightarrow 
H^{0} (\widetilde{W}, 
\mathscr{O}_{\widetilde{W}}(M))=  
H^{0}(\widetilde{V},
\mathscr{O}_{\widetilde{V}}(M)) \oplus \mathbb{C}(x_{2})
$$ 
is surjective. By taking a pull-back 
of $0 \oplus 1$, we get what we wanted.
\end{proof}

\begin{rem}\label{rem4.3} 
In the case when $V\subsetneq W_2$, 
the assumptions in 
Theorem \ref{thm1.3} can be replaced as follows.
\begin{itemize}
\item[$(1)$] If $x_{2} \in Z\subset X$ is an 
irreducible (positive dimensional) subvariety, then
$$
N^{\dim Z} \cdot Z>c(\dim Z)^{\dim Z}.
$$
\item[$(2)$] The numbers $c(k)$ satisfy the inequality:
$$
\sum \limits_{k=1}\limits^{\dim W_{2}} \frac{k}{c(k)}\leq 1.
$$
\end{itemize}
This is obvious by the above proof of Theorem \ref{thm1.3} 
for the case when $V\subsetneq W_2$. 
\end{rem}

From now on, we treat the case when $V=W_1=W_2$. 

\begin{prop}\label{prop4.4}
Let $X$ be a projective 
irreducible variety with $\dim X=n$. 
Let $\omega$ be 
an $\mathbb R$-Cartier divisor on $X$. 
Assume that there exists a nonempty 
Zariski open set $U\subset X$ such that 
$[U, \omega \vert_{U}]$ is a qlc pair. 
Let $H$ be an ample $\mathbb R$-divisor 
on $X$ such that $H^{n}>2n^{n}$. 
Let $x_{1}, x_{2}$ be two closed points of $U$ such that 
no qlc centers of $[U, \omega\vert_{U}]$ contain $x_{1}, x_{2}$. 
Then there is an effective $\mathbb R$-Cartier divisor 
$B_{x_{1}, x_{2}}$ on $X$ such 
that $B_{x_{1}, x_{2}}\sim_{\mathbb R} H$ 
and that $[U, (\omega+B_{x_{1}, x_{2}})\vert_{U}]$ 
is not qlc at $x_{1}, x_{2}$. 
\end{prop}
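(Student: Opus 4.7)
The proof follows the strategy of Proposition \ref{prop3.3}, replacing the single section through $x$ by two sections tracking $x_{1}$ and $x_{2}$; the doubled volume hypothesis $H^{n} > 2n^{n}$ will be absorbed by a two-section refinement of \cite[Lemma 12.2]{fuji3}.

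First, I would set up the family. Take a general smooth irreducible curve $\Gamma \subset X \times X \times \mathbb{A}^{1}$ through $(x_{1}, x_{2}, 0)$, so that $\Gamma \to \mathbb{A}^{1}$ is finite. Let $\nu: C \to \Gamma$ be the normalization, with $t_{0} \in C$ the preimage of $(x_{1}, x_{2}, 0)$, and base-change $X \times \mathbb{A}^{1} \to \mathbb{A}^{1}$ along $C \to \mathbb{A}^{1}$ to obtain $p_{2}: X \times C \to C$. Composing $\nu$ with the two projections $X \times X \times \mathbb{A}^{1} \to X \times \mathbb{A}^{1}$ and lifting via the base change produces two sections $s_{1}, s_{2}: C \to X \times C$ of $p_{2}$ with $s_{i}(t_{0}) = (x_{i}, t_{0})$. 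By generality, $s_{i}(t) = (\phi_{i}(t), t)$ where $\phi_{i}: C \to X$ is a general morphism, so $\phi_{i}(t)$ is a general smooth point of $X$ for general $t$.

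Next, using $H^{n} > 2n^{n}$ and a two-section version of \cite[Lemma 12.2]{fuji3}---which is available because imposing $\mathrm{mult} > n$ at two distinct points of a generic fiber $X$ asymptotically uses twice as many linear conditions as at a single point---I would construct an effective $\mathbb{R}$-Cartier divisor $B$ on $X \times C$ with $B \sim_{\mathbb{R}} p_{1}^{*}H$ and $\mathrm{mult}_{s_{i}(C)} B > n$ for $i = 1, 2$. After shrinking $C$ around $t_{0}$ so that $B$ contains no fiber of $p_{2}$ and shrinking $U$ so that $[U, \omega|_{U}]$ contains no qlc centers, Lemma \ref{lem2.8} endows $[U \times C, p_{1}^{*}\omega|_{U} + p_{2}^{*}t_{0} + B|_{U \times C}]$ with a natural quasi-log structure in which $p_{2}^{*}t_{0} \cong U$ is a qlc center.

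Finally, set $B_{x_{1}, x_{2}} := B|_{p_{2}^{*}t_{0}}$ and run the inversion-of-adjunction contradiction separately for each $i \in \{1, 2\}$: if $[U, (\omega + B_{x_{1}, x_{2}})|_{U}]$ were qlc at $x_{i}$, Theorem \ref{thm2.10} applied to $[U \times C, p_{1}^{*}\omega|_{U} + p_{2}^{*}t_{0}]$, the divisor $B|_{U \times C}$, and the qlc center $p_{2}^{*}t_{0} \cong U$ would force $[U \times C, p_{1}^{*}\omega|_{U} + p_{2}^{*}t_{0} + B|_{U \times C}]$ to be qlc in a neighborhood of $(x_{i}, t_{0})$, hence qlc at $(\phi_{i}(t), t)$ for $t$ near $t_{0}$; adjunction to the fiber $p_{2}^{*}t$ would then make $[U, (\omega + B|_{p_{2}^{*}t})|_{U}]$ qlc at the general smooth point $\phi_{i}(t)$, contradicting Lemma \ref{lem3.2} since $\mathrm{mult}_{\phi_{i}(t)} B|_{p_{2}^{*}t} > n$. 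This shows $B_{x_{1}, x_{2}}$ is not qlc at either $x_{i}$, and pushing it forward to $X$ produces the required divisor. The main obstacle is supplying the two-section refinement of \cite[Lemma 12.2]{fuji3} in the second step, which is precisely where the doubled bound $H^{n} > 2n^{n}$ is consumed.
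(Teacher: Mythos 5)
Your proposal is correct and follows essentially the same route as the paper: the paper's proof likewise takes a pointed curve $C$ with two sections $s_1, s_2$ through $(x_1,0)$ and $(x_2,0)$ whose images in $X$ are general curves, produces $B\sim_{\mathbb R} p_1^*H$ on $X\times C$ with $\mathrm{mult}_{s_i(C)}B>n$ for $i=1,2$ (this is where $H^n>2n^n$ is used, exactly as you indicate), and then repeats the inversion-of-adjunction contradiction of Proposition \ref{prop3.3} at each point. Your explicit construction of the two sections via a general curve in $X\times X\times \mathbb{A}^1$ and your multiplicity-counting justification of the two-point version of \cite[Lemma 12.2]{fuji3} simply fill in details that the paper leaves as an exercise.
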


\begin{proof}
As in the proof of Proposition \ref{prop3.3}, 
we can take a smooth pointed affine curve $0\in C$ such that 
the second projection $p_2: X\times C\to C$ has two sections 
$s_1$ and $s_2$, $s_1(C)$ (resp.~$s_2(C)$) is passing 
through $(x_1, 0)$ (resp.~$(x_2, 0)$) on $X\times C$, 
and $p_1(s_i(C))$ is a general curve on $X$ for $i=1, 2$, 
where $p_1$ is the first projection $X\times C\to X$. 
Then we can find an effective $\mathbb R$-Cartier divisor 
$B$ on $X\times C$ such that 
$B\sim _{\mathbb R} p_1^*H$ and 
$\mathrm{mult}_{s_i(C)}B >n$ for $i=1,2$. 
The same arguments as in the proof of Proposition \ref{prop3.3} 
produce a desired effective $\mathbb R$-Cartier divisor 
$B_{x_1, x_2}$ on $X$. 
We leave the details as an exercise for the reader. 
\end{proof}

Thus we can generalize 
Proposition \ref{prop3.4} to the following one
without any difficulties. 

\begin{prop}\label{prop4.5}
Let $X$ be a projective 
irreducible variety and $\omega$ be 
an $\mathbb R$-Cartier divisor on $X$. 
Assume that there exists a nonempty 
Zariski open set $U\subset X$ such that 
$[U, \omega \vert_{U}]$ is a qlc pair.
Let $x_{1}, x_{2} \in U$ be two closed points and $Z$ be the common minimal 
qlc stratum of $[U, \omega \vert_{U}]$ passing through $x_{1}, x_{2}$ 
with $k=\dim Z>0$. 
Let $H$ be an ample $\mathbb R$-divisor 
on $X$ such that $H^{k} \cdot \overline{Z}>2k^{k}$, 
where $\overline {Z}$ is the closure of $Z$ in $X$. Then 
there are an effective $\mathbb R$-Cartier 
divisor $B\sim_{\mathbb R} H$, 
a real number $0<c<1$, and an 
open neighborhood $x_{1}, x_{2}\in X^{0}\subset U$ such that: 
\begin{itemize}
\item[$(1)$] $[X^{0},(\omega+cB)\vert_{X^{0}}]$ 
is qlc at one point of $x_{1}, x_{2}$, say at $x_{1}$, 
\item[$(2)$] there is a minimal qlc stratum $Z_{1}$ of 
$[X^{0},(\omega+cB)\vert_{X^{0}}]$ passing through $x_{1}$ with 
$\dim Z_{1}<\dim Z$, and 
\item[$(3)$] $[U, (\omega+B)|_U]$ is not qlc at $x_2$ or there 
exists a 
qlc center of $[U, (\omega+B)|_U]$ passing through $x_2$. 
\end{itemize}
\end{prop}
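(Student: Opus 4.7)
The plan is to adapt the proof of Proposition~\ref{prop3.4} to the two-point setting, using Proposition~\ref{prop4.4} in place of Proposition~\ref{prop3.3}. By adjunction (Theorem~\ref{thm2.7}), $[Z,\omega|_Z]$ is a qlc pair, and since $Z$ is the common minimal qlc stratum of $[U,\omega|_U]$ through $x_1$ and $x_2$, no qlc center of $[Z,\omega|_Z]$ contains either point and $Z$ is normal at both. Applying Proposition~\ref{prop4.4} to $\overline Z$ with the ample divisor $H|_{\overline Z}$ (the hypothesis $H^k\cdot\overline Z>2k^k$ is exactly what Proposition~\ref{prop4.4} demands to handle the two points simultaneously), I obtain, after scaling by a positive integer $m$, an effective $\mathbb R$-Cartier divisor $F_Z\sim_{\mathbb R} mH|_{\overline Z}$ on $\overline Z$ such that $[Z,(\omega+\tfrac{1}{m}F_Z)|_Z]$ is not qlc at both $x_1$ and $x_2$.

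Next, I globalize $F_Z$ verbatim as in the proof of Proposition~\ref{prop3.4}: decompose $H=H_1+\sum_{i\ge 2}a_iH_i$ with $H_1$ an ample $\mathbb Q$-divisor satisfying $H_1^k\cdot\overline Z>2k^k$ and the $H_i$ ($i\ge 2$) ample Cartier, split $F_Z$ correspondingly as $F_1+\sum_{i\ge 2}a_iF_i$ with $F_i\sim_{\mathbb Q} mH_i|_{\overline Z}$, then enlarge $m$ so that $mH_i$ is Cartier with $F_i\sim mH_i|_{\overline Z}$, $\mathscr I_{\overline Z}\otimes\mathscr O_X(mH_i)$ globally generated, and $H^1(X,\mathscr I_{\overline Z}\otimes\mathscr O_X(mH_i))=0$ by Serre vanishing. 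The resulting surjection of restriction maps lifts each $F_i$ to a sufficiently general effective divisor $D_i$ on $X$; set $F=D_1+\sum_{i\ge 2}a_iD_i$ and $B=\tfrac{1}{m}F$, so $F\sim_{\mathbb R}mH$ and $F|_{\overline Z}=F_Z$. Genericity of the lifts ensures that on some open neighborhood $X^0\subset U$ of $\{x_1,x_2\}$, the pair $[X^0,(\omega+B)|_{X^0}]$ is qlc away from $Z$.

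To finish, apply Theorem~\ref{thm2.10} (inversion of adjunction) locally at each of $x_1$ and $x_2$: since $[Z,(\omega+B)|_Z]$ fails to be qlc at both points, so does $[U,(\omega+B)|_U]$, giving condition~(3) with its first alternative. Now let $c\in(0,1)$ be the largest real number such that $[X^0,(\omega+cB)|_{X^0}]$ is qlc at $x_1$; after shrinking $X^0$, a new minimal qlc center $Z_1\ni x_1$ of $[X^0,(\omega+cB)|_{X^0}]$ appears, and because $Z$ persists as a qlc center under the quasi-log structure supplied by Lemma~\ref{lem2.9}, we have $x_1\in Z_1\subset Z$ with $\dim Z_1<\dim Z$, establishing (1) and (2). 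The main obstacle is arranging the global lift so that it is general enough to avoid creating spurious non-qlc loci away from $Z$ while forcing $F|_{\overline Z}=F_Z$ to control both $x_1$ and $x_2$; once this is done and the local form of Theorem~\ref{thm2.10} is invoked at each point separately, the argument is a direct two-point analogue of Proposition~\ref{prop3.4}, with the extra factor $2$ in the numerical hypothesis being precisely the budget consumed by Proposition~\ref{prop4.4}.
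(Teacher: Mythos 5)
Your proposal is correct and is precisely the argument the paper intends: the paper gives no written proof of Proposition \ref{prop4.5}, only the remark that Proposition \ref{prop3.4} generalizes ``without any difficulties'' once Proposition \ref{prop4.4} replaces Proposition \ref{prop3.3}, and your write-up (Proposition \ref{prop4.4} applied on $\overline{Z}$, the lifting and genericity step copied from the proof of Proposition \ref{prop3.4}, inversion of adjunction via Theorem \ref{thm2.10} at each of the two points, then the qlc-threshold argument at $x_1$) is exactly that generalization. In particular, securing the first alternative of condition $(3)$ by inversion of adjunction, and getting $(1)$ and $(2)$ from the threshold at $x_1$ together with the persistence of $Z$ as a qlc center, matches both the statement and the paper's own level of rigor in Proposition \ref{prop3.4}.
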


\begin{proof}[Proof of Theorem \ref{thm1.3} 
when $V=W_1=W_2$]
We use Proposition \ref{prop4.5} to 
cut down $X=W_{1}= W_{2}$. Then we obtain 
$[X,\omega+cB]$, where $0<c<1$ is a real number, such that:
\begin{itemize}
\item[$(1)$] $B$ is an effective $\mathbb R$-Cartier 
divisor 
such that $B \sim_{\mathbb R} \frac{2^{1/k}k}{c(k)}N$, 
where $k=\dim X$.
\item[$(2)$] $[X,\omega+cB]$ is qlc at one point of $x_{1}, x_{2}$, 
say at $x_{1}$, 
\item[$(3)$] there is a minimal qlc stratum $\overline {Z_{1}}$ of 
$[X,\omega+cB]$ passing through $x_{1}$ with 
$\dim \overline {Z_{1}}<\dim Z$, and 
\item[(4)] $[X, \omega+cB]$ is not qlc at $x_2$ or there is 
a qlc center of $[X, \omega+cB]$ passing through $x_2$. 
\end{itemize}
If $[X,\omega+cB]$ is qlc at both $x_{1}$ and 
$x_{2}$ and if $x_{1}$ and $x_{2}$ are still stay on 
the same minimal qlc center of $[X,\omega+cB]$, then we apply 
Proposition \ref{prop4.5} again. 
By repeating this process finitely many times,
we will obtain the situation 
where there is a suitable 
effective $\mathbb R$-Cartier divisor $B'$, such that 
$[X,\omega+B']$ is not qlc at one of $x_{1}$ and $x_{2}$, or
$x_{1}$ and $x_{2}$ are on different minimal qlc centers of 
$[X,\omega+cB]$. Then we will go to the first case
proved by
Proposition \ref{prop4.1} (and Section \ref{sec3}). 
We leave the details as an exercise for the 
reader. 
Thus we get what we want. 
\end{proof}

As a corollary of Theorem \ref{thm1.3}, we have: 

\begin{cor}[Effective point separation for semi-log canonical 
pairs] \label{cor4.6}
Let $(X, \Delta)$ be a projective semi-log canonical pair.
Let $M$ be a Cartier divisor on $X$ such that 
$N=M-(K_X+\Delta)$ is ample. 
Let $x_{1}, x_{2}\in X$ be two closed points. We assume 
that there are positive numbers $c(k)$ with the following properties.
\begin{itemize}
\item[$(1)$] If $Z\subset X$ is an 
irreducible {\em{(}}positive dimensional{\em{)}} subvariety 
that contains $x_{1}$ or $x_{2}$, then
$$
N^{\dim Z} \cdot Z>c(\dim Z)^{\dim Z}.
$$
\item[$(2)$] The numbers $c(k)$ satisfy the inequality:
$$
\sum \limits_{k=1}\limits^{\dim X} 2^{1/k} \frac{k}{c(k)}\leq 1.
$$
\end{itemize}
Then $\mathscr O_X(M)$ has a global section separating $x_{1}$ and $x_{2}$.
\end{cor}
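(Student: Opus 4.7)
The plan is to reduce Corollary \ref{cor4.6} directly to Theorem \ref{thm1.3}, exactly parallel to the way Corollary \ref{cor3.5} was deduced from Theorem \ref{thm1.1}. The key input is \cite[Theorem 1.1]{fuji6}, which says that any quasi-projective semi-log canonical pair $(X, \Delta)$ carries a natural quasi-log structure on $[X, K_X+\Delta]$ with only qlc singularities, and that this quasi-log structure is compatible with the original semi-log canonical structure (in the sense that the qlc strata are the slc strata and $\omega = K_X+\Delta$ is $\mathbb R$-Cartier).

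First, I would invoke \cite[Theorem 1.1]{fuji6} to view $[X, K_X+\Delta]$ as a projective qlc pair with $\omega = K_X+\Delta$ an $\mathbb R$-Cartier divisor. Since $(X, \Delta)$ is projective, so is the underlying quasi-log scheme, and $M$ is a Cartier divisor with $N = M - \omega$ ample. Next, I would observe that hypotheses (1) and (2) of Corollary \ref{cor4.6} are literally identical to hypotheses (1) and (2) of Theorem \ref{thm1.3}: the intersection-number condition on irreducible positive-dimensional subvarieties containing $x_1$ or $x_2$, and the numerical inequality $\sum_{k=1}^{\dim X} 2^{1/k}\frac{k}{c(k)} \leq 1$, involve only $N$, $X$, and the points $x_1, x_2$, and make no reference to the qlc/slc structure beyond $\omega$.

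Therefore, applying Theorem \ref{thm1.3} to the qlc pair $[X, \omega]$ with the Cartier divisor $M$ and the two closed points $x_1, x_2 \in X$ yields a global section of $\mathscr O_X(M)$ separating $x_1$ and $x_2$, which is exactly the conclusion of Corollary \ref{cor4.6}.

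There is essentially no obstacle here: once \cite[Theorem 1.1]{fuji6} is granted, the corollary is a one-line consequence of Theorem \ref{thm1.3}. The only thing worth remarking is that the compatibility of the quasi-log structure with the slc structure ensures that ``closed point of $X$'' has the same meaning in both formulations and that $K_X+\Delta$ really does play the role of $\omega$, so the numerical hypotheses transfer without modification.
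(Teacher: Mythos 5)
Your proposal is correct and is exactly the paper's argument: the paper proves Corollary \ref{cor4.6} by referring to the proof of Corollary \ref{cor3.5}, which does precisely what you describe --- invoke \cite[Theorem 1.1]{fuji6} to endow $[X, K_X+\Delta]$ with a natural qlc quasi-log structure and then apply Theorem \ref{thm1.3} (resp.\ Theorem \ref{thm1.1}) verbatim, since the numerical hypotheses transfer unchanged.
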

\begin{proof}
See the proof of Corollary \ref{cor3.5}.
\end{proof}

\end{document}